\theoremstyle{plain}
\newtheorem*{acknowledgement}{Acknowledgement}
\newtheorem{corollary}{\bf Corollary}
\newtheorem{definition}{\bf Definition}
\newtheorem{lemma}{\bf Lemma}
\newtheorem{theorem}{\bf Theorem}
\theoremstyle{definition}
\numberwithin{equation}{section}
\title[Einstein type manifolds]{Compact Einstein-type manifolds \\ with parallel Ricci tensor}
\author{M. Andrade$^\ast$}
\author{H. Baltazar}
\author{C. Queiroz}
\address[M. Andrade]{Departamento de Matem\'{a}tica, Universidade Federal de Sergipe \\
49100-000, S\~ao Cristov\~ao, Sergipe, Brazil.}
\email{maria@mat.ufs.br}
\address[H. Baltazar]{Departamento de Matem\'{a}tica, Universidade Federal do Piau\'{\i}\\
64049-550 Te\-re\-si\-na, Piau\'{\i}, Brazil.}
\email{halyson@ufpi.edu.br}
\address[C. Queiroz]{Departamento de Matem\'{a}tica, Universidade Federal do Piau\'{\i}\\
64049-550 Te\-re\-si\-na, Piau\'{\i}, Brazil.}
\email{chrisqueiroz@ufpi.edu.br}
\subjclass[2010]{Primary 53C25, 53C20, 53C21; Secondary 53C65}
\keywords{Einstein-type manifolds; Weyl tensor; Parallel
Ricci curvature}
\thanks{$^\ast$ Corresponding author.}
\begin{document}

\newcommand{\spacing}[1]{\renewcommand{\baselinestretch}{#1}\large\normalsize}
\spacing{1.2}

\begin{abstract}
In this paper, we deduce a Bochner-type identity for compact gradient Einstein-type manifolds with boundary. As consequence, we are able to show a rigidity result for Einstein-type manifolds assuming the parallel Ricci curvature condition. Moreover, we provide a condition on the norm of the gradient of the potential function in order to classify such structures.
\end{abstract}

\maketitle

\section{Introduction}\label{intro}

The study of Einstein-type manifolds was introduced by Catino et al. \cite{CMMR} and emerges as a way to unify some structures that are widely investigated in the literature, namely, we can highlight the static vacuum Einstein equation \cite{HCY16}, static vacuum equation with non null cosmological constant \cite{Amb}, static perfect fluid \cite{Shen}, Miao-Tam equation \cite{MT11}, $(\lambda,n+1)$-Einstein manifolds \cite{FS20}, and many others. Now, following the terminology employed in \cite{AA22,HY,HY22,L21} we recall the definition of Einstein-type manifolds.

\begin{definition}\label{def1} 
An Einstein-type manifold is a Riemannian manifold $(M^{n},g)$ with $n\geq3$ which admits smooth functions $f,h:M\rightarrow\mathbb{R}$ such that 
\begin{equation}\label{fundEq}
fRic=Hessf+hg,
\end{equation}
where $f>0$ in $int(M)$ and $f^{-1}(0)=\partial M.$ Here, $Ric$ and $Hess$ stand, respectively, for the Ricci tensor and the Hesssian form on $M^{n}.$ 
\end{definition}

We shall refer equation (\ref{fundEq}) as the fundamental equation of an Einstein-type manifold $(M^{n},g,f,h).$ These structures generalize important equations. For example, if $f$ is constant, then $(M^n, g)$ is Einstein and $\partial M = \emptyset$. If we consider $h=0,$ then it reduces to the static vacuum Einstein equation. If $h=\frac{R}{n-1}f,$ then we have the vacuum static equation. If $h=\frac{R}{n-1}f+\dfrac{k}{n-1},$ where $k$ is a constant, we obtain the so called $V$-static equation. If $h=\frac{R}{n-1}f-\frac{R}{n(n-1)},$ we obtain the critical point equation with $f=1+F$ and $F$ is the potential function in the critical point equation. Finally, if $h=\frac{R-\rho-\mu}{n-1}f,$ we have the static perfect fluid equation. For more details see \cite{AA22}.

Recently, Freitas and Gomes were able to classify Einstein-type manifold which are Einstein, see Theorems 1 and 2 in \cite{GF22}. In their paper, the fundamental equation of the Einstein-type manifolds were studied in the following way
\begin{equation}\label{FGeq}
Hessf=\frac{\mu}{\beta}f(\Lambda g-\frac{\alpha}{\beta}Ric)+\gamma g,
\end{equation}
for some smooth function $\Lambda$ on $M^{n}$ and constants $\alpha,\beta,\mu,\gamma\in \mathbb{R},$ with $\beta\neq0.$ Clearly, considering $\frac{\mu\alpha}{\beta^{2}}=-1$ and $h=-\frac{\mu\Lambda}{\beta}f-\gamma,$ we derive the equation (\ref{fundEq}). Thus, the equation (\ref{fundEq}) can be seen as a particular case of (\ref{FGeq}).

One of the most important structure that have been studied in the literature and which satisfies the Einstein-type equation for $h=-\Delta f$, is the positive static triples. For our purpose,  let us recall the classification of positive static triples for a three dimensional manifold satisfying the Ricci parallel condition. In fact, as consequence of Kobayashi and Lafontaine results for static locally conformally flat manifolds, see \cite{kobayashi} and \cite{lafontaine}, respectively, it is immediate to deduce the following result.

\begin{theorem}[Kobayashi \cite{kobayashi}, Lafontaine \cite{lafontaine}]\label{KL}
Let $(M^{3},\,g,\,f)$ be an $3$-dimensional positive static triple with scalar curvature $R=6.$ Suppose that $(M^{3},\,g)$ has parallel Ricci tensor, then $(M^{3},\,g,\,f)$ is covered by a static triple equivalent to one of the following two static triples:
\begin{enumerate}
\item The standard hemisphere with canonical metric $(\mathbb{S}^{3}_{+},g_{\mathbb{S}^{3}}).$
\item The standard cylinder over $\mathbb{S}^{2}$ with the product metric
$$\Big(M=\Big[0,\frac{\pi}{\sqrt{3}}\Big]\times\mathbb{S}^{2},\; g=dt^{2}+\frac{1}{3}g_{\mathbb{S}^{2}}\Big).$$
\end{enumerate}
\end{theorem}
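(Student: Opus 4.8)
The plan is to reduce the statement to the known classification of \emph{locally conformally flat} positive static triples, which is exactly what Kobayashi \cite{kobayashi} and Lafontaine \cite{lafontaine} supply. A positive static triple $(M^{3},g,f)$ is an Einstein-type manifold with $h=-\Delta f$, so tracing \eqref{fundEq} gives $\Delta f=-\tfrac{R}{n-1}f$; with $n=3$ and the normalization $R=6$ the fundamental equation becomes
\begin{equation*}
Hess\,f=f\left(Ric-3g\right),\qquad \Delta f=-3f.
\end{equation*}
First I would record the immediate consequence of the hypothesis $\nabla Ric=0$, namely $\nabla R=\mathrm{tr}_{g}(\nabla Ric)=0$; in particular the eigenvalues of $Ric$ are constant on $M^{3}$.

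The crux is that in dimension three the obstruction to local conformal flatness is not the Weyl tensor, which vanishes identically, but the Cotton tensor
\[
C_{ijk}=\nabla_{k}R_{ij}-\nabla_{j}R_{ik}-\frac{1}{2(n-1)}\left(g_{ij}\nabla_{k}R-g_{ik}\nabla_{j}R\right).
\]
Since $\nabla Ric$ and $\nabla R$ both vanish, $C\equiv 0$, hence $(M^{3},g)$ is locally conformally flat (indeed, parallel Ricci curvature yields even more, namely a de Rham splitting into space forms, but conformal flatness is all we need here). I would then apply the classification of complete---in particular, compact---locally conformally flat static spaces in \cite{kobayashi,lafontaine}: up to a covering, such a triple is either a space form whose potential is a first eigenfunction of the Laplacian, or a warped product $I\times_{\varphi}N^{2}$ over an interval with $(N^{2},g_{N})$ of constant curvature and $f=f(t)$. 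Requiring $M$ to be compact with $\partial M=f^{-1}(0)\neq\emptyset$ and $R=6>0$ leaves exactly two possibilities: the round hemisphere $(\mathbb{S}^{3}_{+},g_{\mathbb{S}^{3}})$, for which $R=n(n-1)=6$, $Ric=2g$, and $f$ is the restriction of a linear coordinate of $\mathbb{R}^{4}$, positive in $int(M)$ and vanishing on the equator $\mathbb{S}^{2}=\partial\mathbb{S}^{3}_{+}$, so that $Hess\,f=-fg=f(Ric-3g)$; and the Riemannian product $\big([0,\tfrac{\pi}{\sqrt{3}}]\times\mathbb{S}^{2},\,dt^{2}+\tfrac{1}{3}g_{\mathbb{S}^{2}}\big)$, whose $\mathbb{S}^{2}$-factor has Gauss curvature $3$, so that $Ric$ is parallel, $R=0+2\cdot 3=6$, and $f(t)=\sin(\sqrt{3}\,t)$ solves $f''=-3f$ and vanishes precisely on $\{0,\tfrac{\pi}{\sqrt{3}}\}\times\mathbb{S}^{2}$. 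A direct check that both triples satisfy the static equations then finishes the argument.

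Essentially all the work is already contained in \cite{kobayashi,lafontaine}; the genuine steps here are the passage from parallel Ricci curvature to local conformal flatness in dimension three (through the vanishing of the Cotton tensor) and the bookkeeping of which members of the Kobayashi--Lafontaine list survive the constraints $R=6$ and ``compact with nonempty boundary''. I expect this last point to be the only mildly delicate one: one must rule out the noncompact complete warped products with positive scalar curvature, and check that the normalization $R=6$ is what pins down the interval length $\tfrac{\pi}{\sqrt{3}}$ and the coefficient $\tfrac{1}{3}$ in front of $g_{\mathbb{S}^{2}}$. The phrase ``covered by a static triple equivalent to'' in the statement is included only to absorb quotients of these two models; since the hemisphere and the product cylinder are both simply connected, no unwrapping is needed in either case.
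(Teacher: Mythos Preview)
Your proposal is correct and follows exactly the route the paper indicates: it does not prove Theorem~\ref{KL} in detail but simply observes that the result ``is immediate'' from the Kobayashi--Lafontaine classification of locally conformally flat static spaces, which is precisely your reduction (parallel Ricci $\Rightarrow$ $C\equiv 0$ $\Rightarrow$ locally conformally flat in dimension three, then invoke \cite{kobayashi,lafontaine}). The additional bookkeeping you supply---checking the two surviving models satisfy the static equations with the correct normalizations---is more than the paper itself provides.
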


Before to proceed, let us define the function $\lambda$ in terms of $h$ and the potential function $f$, as follows
\begin{equation}\label{lambda}
\lambda=(n-1)h-Rf=-\dfrac{(n-1)}{n}\left[\dfrac{fR}{n-1}+\Delta f\right].
\end{equation}

Such a function appear in \cite{HY} and the authors were able to deduce that the scalar curvature is constant if and only if the $\lambda$ is constant, see Proposition 2.1 in \cite{HY}.

In this article, we will able to classify Einstein-type manifolds satisfying the Ricci parallel curvature condition and the main ingredient will be to prove that this structures are in fact an $V$-static metrics. More precisely, we have established the following result.
\begin{theorem}\label{THM1Ricciparallel}
Let $(M^{n},g,f,h)$ be an Einstein-type manifold with parallel Ricci tensor. Then, the function $\lambda$ defined above is constant and one of the following assertions holds: 
\begin{itemize} 
\item[(1)] if $\lambda>0,$ then $(M^{n},g)$ is isometric to a geodesic ball in $\mathbb{R}^{n}$, $\mathbb{S}^{n}$, or $\mathbb{H}^{n}.$
\item[(2)] if $\lambda=0$ and $n=3$, then $(M^{3},g)$ is covered by one of the static triples described in Theorem~\ref{KL};
\item[(3)] if $\lambda<0$ and $M^{n}$ has connected boundary, then $(M^{n},g)$ is isometric to a geodesic ball in $\mathbb{S}^{n}$. 
\end{itemize} 
\end{theorem}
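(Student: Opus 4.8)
The plan is to show first that parallel Ricci forces $\lambda$ to be constant, then to promote the Einstein-type structure to a $V$-static metric, and finally to invoke known rigidity theorems. First I would take the divergence of the fundamental equation $fRic = \mathrm{Hess}\,f + hg$. Since $\nabla Ric = 0$, the left side gives $Ric(\nabla f, \cdot)$, while the right side, after using the contracted second Bianchi identity $\mathrm{div}\,Ric = \tfrac12\nabla R$ (and $R$ is constant here because $\nabla Ric=0$), gives $\mathrm{div}(\mathrm{Hess}\,f) + \nabla h = Ric(\nabla f,\cdot) + \nabla(\Delta f) + \nabla h$ via the Bochner/commutation formula $\mathrm{div}(\mathrm{Hess}\,f) = Ric(\nabla f,\cdot) + \nabla(\Delta f)$. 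Comparing, I get $\nabla(\Delta f + h) = 0$, so $\Delta f + h$ is constant. Tracing the fundamental equation gives $fR = \Delta f + nh$, hence $h = \tfrac{1}{n-1}(fR - \Delta f - (\Delta f + h))$... more cleanly, from $\lambda = (n-1)h - Rf = -\tfrac{n-1}{n}\big(\tfrac{fR}{n-1} + \Delta f\big)$ and the trace relation, I can express $\lambda$ as an affine combination of the constants $R$ and $\Delta f + h$, so $\lambda$ is constant. (Alternatively this is exactly Proposition 2.1 of \cite{HY} applied in reverse, since $R$ is constant.)

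Next I would rewrite the structure as a $V$-static metric. Recall $V$-static means $h = \tfrac{R}{n-1}f + \tfrac{k}{n-1}$ for a constant $k$; equivalently the fundamental equation becomes $f Ric - \mathrm{Hess}\,f - \tfrac{R}{n-1}fg = \tfrac{k}{n-1}g$, and substituting $\lambda = (n-1)h - Rf$ this reads $fRic - \mathrm{Hess}\,f = \tfrac{\lambda + Rf}{n-1}g$, i.e. precisely the $V$-static equation with $k = \lambda$. Since $\lambda$ is now known to be constant, $(M^n, g, f)$ is a $V$-static metric with constant $k = \lambda$, and the sign of $\lambda$ matches the sign of the $V$-static constant. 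At this point I would invoke the classification of compact $V$-static metrics with parallel (equivalently, in low dimensions, locally conformally flat) Ricci tensor: for $\lambda > 0$ these are geodesic balls in the space forms $\mathbb{R}^n, \mathbb{S}^n, \mathbb{H}^n$ (this is the Miao–Tam / V-static rigidity under Ricci-parallel or constant curvature hypotheses); for $\lambda < 0$ with connected boundary one gets a geodesic ball in $\mathbb{S}^n$; and for $\lambda = 0$ the equation degenerates to the static vacuum equation $fRic = \mathrm{Hess}\,f$, $\Delta f = 0$, which in dimension $3$ with parallel Ricci is handled by Theorem~\ref{KL} of Kobayashi–Lafontaine.

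The main obstacle is the passage from "parallel Ricci" to the explicit geometric conclusion in each sign regime — that is, quoting or reproving the space-form rigidity for $V$-static metrics. The cleanest route is to show that parallel Ricci plus the $V$-static equation forces $g$ to have constant sectional curvature on $int(M)$: from $\nabla Ric = 0$ one splits $M$ locally as a product, and the $V$-static equation (with $\mathrm{Hess}\,f$ proportional to $g$ off the zero set, once one shows the non-constant-curvature factors are incompatible with $f > 0$ and the boundary condition) collapses the product to a single space form; then the classification of $V$-static space forms with the prescribed boundary behavior pins down the geodesic ball. I would organize the write-up as: (i) Lemma — $R$ constant and $\lambda$ constant under $\nabla Ric = 0$; (ii) Lemma — the structure is $V$-static with constant $k = \lambda$; (iii) reduce to constant curvature using the parallel Ricci condition and analyze the three cases via the sign of $\lambda$, citing Theorem~\ref{KL} for the borderline $n = 3$, $\lambda = 0$ case. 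The sign-by-sign endgame, especially ruling out non-spherical models when $\lambda < 0$ using connectedness of $\partial M$, is where the real care is needed.
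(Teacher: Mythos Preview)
Your overall architecture matches the paper: show $\lambda$ is constant, recast the equation as a $V$-static metric with constant $k=\lambda$, and then split into cases by the sign of $\lambda$, citing \cite{br17} for $\lambda>0$ and Theorem~\ref{KL} for $\lambda=0$, $n=3$. Your computation that $\lambda=-(\Delta f+h)$ is constant via the divergence of the fundamental equation is correct, though the paper does this more quickly from~\eqref{gradh}: since $\nabla Ric=0$ gives $\nabla R=0$, equation~\eqref{gradh} reads $(n-1)\nabla h=R\nabla f$, so $\nabla\lambda=\nabla[(n-1)h-Rf]=0$.

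The substantive divergence is in the $\lambda<0$ case. The paper does \emph{not} attempt a de~Rham splitting argument. Instead it uses the Bochner-type identity of Lemma~\ref{LBochner}: under $\nabla Ric=0$ every term in the vector field $X$ and every term on the right-hand side vanishes except $-\big(\tfrac{Rf}{n-1}+\Delta f\big)|\mathring{Ric}|^{2}=\tfrac{n}{n-1}\lambda|\mathring{Ric}|^{2}$, so one reads off $\lambda|\mathring{Ric}|^{2}=0$ pointwise. With $\lambda<0$ this forces $(M^{n},g)$ to be Einstein immediately, and then the paper invokes Theorem~1 of Freitas--Gomes~\cite{GF22} (for compact gradient Einstein-type manifolds that are Einstein, with connected boundary) to conclude the geodesic ball in $\mathbb{S}^{n}$.

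Your proposed route---local product splitting from $\nabla Ric=0$, then arguing the $V$-static equation is incompatible with nontrivial factors---is not worked out, and you flag this yourself. The step ``$\mathrm{Hess}\,f$ proportional to $g$ off the zero set'' does not follow from what you have; it is essentially the Einstein conclusion you are trying to reach. There is also no off-the-shelf $V$-static rigidity theorem for $\lambda<0$ under parallel Ricci that you can simply cite, which is precisely why the paper builds Lemma~\ref{LBochner}. So for the $\lambda<0$ case you are missing the key idea: a pointwise identity that kills $\mathring{Ric}$ directly, after which the classification of \cite{GF22} finishes the job.
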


Our next result was inspired by B. Leandro \cite{L15}, where the author studied a necessary and sufficient condition on the norm of the gradient of the potential function for a CPE metrics to be Einstein, and more recently by first author and C. Queiroz in \cite{BQ24}, where similar results were deduced for Miao-Tam critical metrics. More precisely, we have the following result.

\begin{theorem}\label{Thm2}
Let $(M^{n},g,f,h)$  be an Einstein-type manifold satisfying 
\begin{equation}\label{idenTHM2}
\frac{1}{2}|\nabla f|^{2}+\frac{1-2n}{2n(n-1)}Rf^{2}+hf=const.
\end{equation}
Then, $(M^{n},g)$ is an Einstein manifold provided
$$\int_{M}\langle\nabla R,\nabla f\rangle dM_{g}\geq0.$$
\end{theorem}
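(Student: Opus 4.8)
The strategy is to start from the fundamental equation and the hypothesis \eqref{idenTHM2}, compute the divergence of a well-chosen 1-form, and arrive at an integral identity whose sign is controlled by $\int_M \langle \nabla R,\nabla f\rangle\, dM_g$. First I would take the trace of \eqref{fundEq} to get $fR = \Delta f + nh$, which expresses $h$ in terms of $f$, $\Delta f$ and $R$; equivalently, recalling \eqref{lambda}, $h = \tfrac{1}{n-1}(\lambda + Rf)$. This lets me rewrite the left-hand side of \eqref{idenTHM2} purely in terms of $f$, $|\nabla f|^2$, $\Delta f$ and $R$. Differentiating the constant quantity in \eqref{idenTHM2} then produces a pointwise relation of the form $\mathrm{Hess}\,f(\nabla f,\cdot) + (\text{terms in } f,\Delta f, R)\,df + (\text{terms})\,f\,dR \equiv 0$; substituting $\mathrm{Hess}\,f = fRic - hg$ from \eqref{fundEq} converts the Hessian term into $fRic(\nabla f,\cdot)$, giving a clean first-order identity linking $Ric(\nabla f,\cdot)$, $\nabla R$, $\nabla f$ and $\nabla\Delta f$.

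Next I would integrate a suitable multiple of this identity against the correct test function. The natural object is $\int_M f\,|\mathring{Ric}|^2\, dM_g$ (or $\int_M f\,(|Ric|^2 - \tfrac{R^2}{n})\,dM_g$), since the conclusion is that $(M^n,g)$ is Einstein, i.e.\ the traceless Ricci tensor $\mathring{Ric}$ vanishes. Using the contracted second Bianchi identity $\mathrm{div}\,Ric = \tfrac12\nabla R$ together with \eqref{fundEq} (so that $\mathrm{div}(fRic) = f\,\mathrm{div}\,Ric + Ric(\nabla f,\cdot) = \tfrac{f}{2}\nabla R + Ric(\nabla f,\cdot)$, while also $\mathrm{div}(fRic) = \mathrm{div}(\mathrm{Hess}\,f) + \nabla h = \nabla \Delta f + Ric(\nabla f,\cdot) + \nabla h$), one obtains the Einstein-type Bochner machinery. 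Integrating by parts, all boundary terms should vanish because $f = 0$ on $\partial M$ (and the remaining boundary integrands carry a factor of $f$ or are handled as in the static/Miao--Tam literature), and the bulk reduces — after inserting the first-order identity from the previous paragraph — to an expression of the shape
\begin{equation*}
c\int_M f\,|\mathring{Ric}|^2\, dM_g = \tfrac{1}{2}\int_M \langle \nabla R,\nabla f\rangle\, dM_g
\end{equation*}
for an explicit positive constant $c$ depending only on $n$ (together possibly with a term that integrates to zero or has a favourable sign). Since $f>0$ in $\mathrm{int}(M)$, the hypothesis $\int_M \langle\nabla R,\nabla f\rangle\,dM_g \geq 0$ forces $\mathring{Ric}\equiv 0$, i.e.\ $M$ is Einstein.

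I expect the main obstacle to be twofold: first, correctly bookkeeping the first-order consequence of \eqref{idenTHM2} and matching its coefficients against those produced by the Bochner computation so that the quantity $|\mathring{Ric}|^2$ (and nothing else with an uncontrolled sign) survives — the coefficient $\tfrac{1-2n}{2n(n-1)}$ in \eqref{idenTHM2} is presumably tuned precisely to make this cancellation happen, so the algebra must be done with care. Second, justifying the vanishing of all boundary contributions: since $f^{-1}(0)=\partial M$, one should check that $|\nabla f|$ is (locally) constant on $\partial M$ and that the normal derivatives appearing in the integration by parts combine to zero, as is standard for these Einstein-type boundary-value problems. Once those two points are settled, the sign hypothesis delivers the conclusion immediately, and one could further note that if the integral is exactly zero then $\mathring{Ric}\equiv 0$ still holds, so the hypothesis is in fact sharp.
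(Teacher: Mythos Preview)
Your overall strategy coincides with the paper's: differentiate the constant $\Lambda$, use \eqref{fundEq} to turn the Hessian term into $f\,Ric(\nabla f,\cdot)$, and integrate a divergence. The paper packages the pointwise step as
\[
f\,\mathring{R}_{ij}\nabla_j f \;=\; \nabla_i\Lambda \;+\; \frac{f^2}{2n}\,\nabla_i R,
\]
and then computes $\mathrm{div}(f\nabla\Lambda)$ (their Lemma~\ref{lemmain}); when $\Lambda$ is constant and one divides through by $f^2$ this yields
\[
\mathrm{div}(f\nabla R)\;=\;2n\,f\,|\mathring{Ric}|^2 \;+\; (n-2)\,\langle\nabla R,\nabla f\rangle .
\]
Integrating and using $f|_{\partial M}=0$ gives $2n\int_M f|\mathring{Ric}|^2 \,=\, -(n-2)\int_M\langle\nabla R,\nabla f\rangle$.

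There is, however, a genuine slip in your final step. You write the target identity as
\[
c\int_M f\,|\mathring{Ric}|^2 \;=\; \tfrac{1}{2}\int_M \langle\nabla R,\nabla f\rangle
\]
with $c>0$ and then claim that $\int_M\langle\nabla R,\nabla f\rangle\geq 0$ forces $\mathring{Ric}\equiv 0$. As stated this is false: a nonnegative left side equal to a nonnegative right side is no contradiction. The argument only works because the coefficient on the right is \emph{negative} (namely $-(n-2)/2n$), so the hypothesis makes the right side $\leq 0$ while the left side is $\geq 0$, forcing both to vanish. This sign is exactly what the ``tuned'' constant $c_n=\tfrac{1-2n}{2n(n-1)}$ produces, so your bookkeeping worry is well placed --- just be aware that the outcome is the opposite sign from what you wrote.

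Your second worry about boundary terms is overcautious here: the only boundary contribution is $\int_{\partial M} f\,\langle\nabla R,\nu\rangle\,dS$, which vanishes because $f=0$ on $\partial M$; no appeal to constancy of $|\nabla f|$ on $\partial M$ is needed for Theorem~\ref{Thm2} (that fact is only used later, in Corollary~\ref{Co2}).
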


As an immediate corollary, we have that 
\begin{corollary}
Let $(M^{n},g,f,h)$  be an Einstein-type manifold with constant scalar curvature satisfying 
\begin{equation}
\frac{1}{2}|\nabla f|^{2}+\frac{1-2n}{2n(n-1)}Rf^{2}+hf=const.
\end{equation}
Then, the function $\lambda$ defined in (\ref{lambda}) is constant and one of the following assertions holds: 
\begin{itemize} 
\item[(1)] if $\lambda>0,$ then $(M^{n},g)$ is isometric to a geodesic ball in $\mathbb{R}^{n}$, $\mathbb{S}^{n}$, or $\mathbb{H}^{n}.$
\item[(2)] if $\lambda=0,$  then, up to a finite quotient, $(M^{n},g)$ is isometric to a standard hemisphere with canonical metric $(\mathbb{S}^{n}_{+},g_{\mathbb{S}^{n}}).$
\item[(3)] if $\lambda<0$ and $M^{n}$ has connected boundary, then $(M^{n},g)$ is isometric to a geodesic ball in $\mathbb{S}^{n}$; 
\end{itemize} 
\end{corollary}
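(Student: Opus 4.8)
The plan is to exploit the two main theorems of the paper in sequence: first use Theorem~\ref{Thm2} to force $(M^{n},g)$ to be Einstein, and then read the trichotomy off Theorem~\ref{THM1Ricciparallel}, upgrading by hand the borderline case $\lambda=0$.

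First I would collect the elementary reductions. Since the scalar curvature $R$ is constant, Proposition~2.1 of \cite{HY} gives immediately that $\lambda=(n-1)h-Rf$ is constant. Moreover $R$ constant means $\nabla R\equiv 0$, so
\[
\int_{M}\langle\nabla R,\nabla f\rangle\,dM_{g}=0\geq 0,
\]
and the assumption of the Corollary is verbatim the identity (\ref{idenTHM2}) appearing in Theorem~\ref{Thm2}. Hence Theorem~\ref{Thm2} applies and $(M^{n},g)$ is Einstein. Being Einstein it has parallel Ricci tensor, so Theorem~\ref{THM1Ricciparallel} is available: for $\lambda>0$ it yields assertion (1) word for word, and for $\lambda<0$ with connected boundary it yields assertion (3) word for word. (Alternatively, once $(M^{n},g)$ is known to be Einstein, the full classification can be quoted directly from Theorems~1 and~2 of \cite{GF22}.) What remains is to improve the case $\lambda=0$, for which Theorem~\ref{THM1Ricciparallel} only gives information when $n=3$ and a priori still allows the product cylinder over $\mathbb{S}^{2}$.

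For the case $\lambda=0$ the idea is to reduce (\ref{fundEq}) to an Obata-type equation. Here $\lambda=0$ means $h=\frac{R}{n-1}f$, so inserting $Ric=\frac{R}{n}g$ into the fundamental equation (\ref{fundEq}) gives
\[
Hess\,f=-\frac{R}{n(n-1)}\,f\,g,
\]
and taking traces, $\Delta f=-\frac{R}{n-1}f$. Since $f>0$ in $int(M)$ and $f\equiv0$ on $\partial M$, the maximum principle excludes $R\leq 0$ (otherwise $f$ would be subharmonic and would attain its maximum on $\partial M$, forcing $f\leq0$, a contradiction), so $R>0$. Up to a constant rescaling of $g$, the potential $f$ is then a non‑constant solution of $Hess\,f+fg=0$ vanishing on $\partial M$, and the Obata--Reilly rigidity theorem for compact manifolds with boundary identifies $(M^{n},g)$, up to a finite quotient, with the standard hemisphere $(\mathbb{S}^{n}_{+},g_{\mathbb{S}^{n}})$, which is assertion (2). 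In particular, for $n=3$ this eliminates the cylinder appearing in Theorem~\ref{KL}, consistently with Theorem~\ref{THM1Ricciparallel}.

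The step I expect to be the main obstacle is this last one: one must invoke the correct version of the Obata-type rigidity valid on a manifold with (a priori possibly disconnected) boundary, match the normalization of $g$ to the value of $R$, and make the ``finite quotient'' statement precise, so that the borderline case genuinely collapses to the hemisphere in every dimension. The earlier steps are essentially bookkeeping built on Theorem~\ref{Thm2}, Theorem~\ref{THM1Ricciparallel}, and Proposition~2.1 of \cite{HY}.
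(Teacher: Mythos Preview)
Your proposal is correct and follows the route the paper intends: the Corollary is stated there as an ``immediate'' consequence of Theorem~\ref{Thm2} with no written proof, and your reconstruction---apply Theorem~\ref{Thm2} (the integral condition is vacuous since $\nabla R=0$) to obtain Einstein, then feed this into Theorem~\ref{THM1Ricciparallel} for $\lambda\neq0$ and handle $\lambda=0$ via the Obata--Reilly equation $Hess\,f=-\frac{R}{n(n-1)}fg$---is exactly the natural unpacking. Your parenthetical remark that one may instead quote Theorems~1 and~2 of \cite{GF22} directly once Einstein is known is also in the spirit of the paper (this is how the authors close case $\lambda<0$ in Theorem~\ref{THM1Ricciparallel}), and would spare you the separate Obata step.
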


Moreover, restricted to three-dimensional case, the integral condition can be weakened. In fact, we have the following result:

\begin{corollary}\label{Co2}
Let $(M^{3},g,f,h)$  be an Einstein-type manifold satisfying 
\begin{equation}
\frac{1}{2}|\nabla f|^{2}-\frac{5}{12}Rf^{2}+hf=const.
\end{equation}
Then, $(M^{3},g)$ is an Einstein manifold provided
$$\int_{\partial M}\langle\nabla R,\nabla f\rangle dS\geq0.$$
\end{corollary}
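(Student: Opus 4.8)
The plan is to reduce Corollary \ref{Co2} to Theorem \ref{Thm2} by showing that, in dimension $n=3$, the boundary integral condition $\int_{\partial M}\langle\nabla R,\nabla f\rangle\,dS\geq0$ already forces the interior integral condition $\int_{M}\langle\nabla R,\nabla f\rangle\,dM_g\geq0$. The natural device is integration by parts: write $\int_{M}\langle\nabla R,\nabla f\rangle\,dM_g = \int_{\partial M} R\,\langle\nabla f,\nu\rangle\,dS - \int_{M} R\,\Delta f\,dM_g$, or dually $\int_{M}\langle\nabla R,\nabla f\rangle\,dM_g = \int_{\partial M} f\,\langle\nabla R,\nu\rangle\,dS - \int_{M} f\,\Delta R\,dM_g$, where $\nu$ is the outward unit normal. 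Since $f$ vanishes on $\partial M$ by Definition \ref{def1}, the first version's boundary term $\int_{\partial M} R\langle\nabla f,\nu\rangle\,dS$ is exactly (a multiple of) the quantity appearing in the hypothesis once we observe that $\nabla f = \langle\nabla f,\nu\rangle\nu$ on $\partial M$ (because $f\equiv0$ there makes $\nabla f$ normal), so $\langle\nabla R,\nabla f\rangle = \langle\nabla R,\nu\rangle\langle\nabla f,\nu\rangle$ on $\partial M$.

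First I would record the consequences on $\partial M$ of the fundamental equation \eqref{fundEq}: since $f|_{\partial M}=0$, restricting \eqref{fundEq} to $\partial M$ gives $Hess\,f = -h\,g$ there, hence $\Delta f = -nh = -3h$ on $\partial M$; moreover $h|_{\partial M}$ and the scalar curvature relate through \eqref{lambda}. Next I would use the special structure in $n=3$: the Weyl tensor vanishes identically, so the whole curvature is controlled by the Ricci tensor, and the usual contracted second Bianchi identity $\Delta R$-type terms combine with \eqref{fundEq} to express $\Delta R$ (or $\mathrm{div}$ of the relevant $1$-form) in a form whose $M$-integral is sign-definite or whose sign is dictated by the boundary term. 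Concretely, the hypothesis \eqref{idenTHM2} specialized to $n=3$ reads $\tfrac12|\nabla f|^2 - \tfrac{5}{12}Rf^2 + hf = const$; taking its gradient and pairing with $\nabla f$ yields a pointwise identity linking $Hess\,f(\nabla f,\nabla f)$, $f\langle\nabla R,\nabla f\rangle$, $Rf\langle\nabla f,\nabla f\rangle$, and $\langle\nabla(hf),\nabla f\rangle$; integrating this over $M$ and repeatedly integrating by parts (all boundary terms carrying a factor $f$ vanish except the one built from $\langle\nabla f,\nu\rangle$) should convert $\int_M \langle\nabla R,\nabla f\rangle\,dM_g$ into $\int_{\partial M}\langle\nabla R,\nabla f\rangle\,dS$ plus a manifestly nonnegative bulk term, so that nonnegativity of the boundary integral implies nonnegativity of the bulk integral, and Theorem \ref{Thm2} then applies to give that $(M^3,g)$ is Einstein.

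The step I expect to be the main obstacle is the last one: showing that the difference $\int_M\langle\nabla R,\nabla f\rangle\,dM_g - c\int_{\partial M}\langle\nabla R,\nabla f\rangle\,dS$ has a definite sign. This requires a clean Bochner-type/Pohozaev-type identity in which, after using \eqref{fundEq}, the vanishing of the Weyl tensor in dimension three, the second Bianchi identity, and the constraint \eqref{idenTHM2}, all the bulk terms organize into something like a squared norm of a trace-free tensor (for instance $|\mathring{Ric}|^2$ weighted by $f$, or $|Hess\,f - \tfrac{\Delta f}{3}g|^2$), which is exactly the Bochner-type identity the authors advertise in the abstract for compact gradient Einstein-type manifolds with boundary. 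I would therefore lean on that identity (presumably established in the body of the paper preceding this corollary) rather than re-deriving it, and the real work is bookkeeping the boundary contributions to verify that only the asserted $\int_{\partial M}\langle\nabla R,\nabla f\rangle\,dS$ survives.
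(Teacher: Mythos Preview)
Your reduction strategy has a genuine gap. You want to show that the boundary hypothesis forces $\int_M\langle\nabla R,\nabla f\rangle\,dM_g\geq0$ and then invoke Theorem~\ref{Thm2}. But once $\Lambda$ is constant, integrating the identity \eqref{divfnablaR} (with $n=3$) over $M$ and using $f|_{\partial M}=0$ gives
\[
0=6\int_M f\,|\mathring{Ric}|^2\,dM_g+\int_M\langle\nabla R,\nabla f\rangle\,dM_g,
\]
so the interior integral is always $\leq0$, and it is $\geq0$ only when $\mathring{Ric}\equiv0$. In other words, proving the interior integral condition is \emph{equivalent} to proving the Einstein conclusion; your proposed reduction to Theorem~\ref{Thm2} is circular, and no integration-by-parts identity of the form ``interior integral $=$ boundary integral $+$ (nonnegative bulk)'' can exist here.

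The paper's argument avoids this by exploiting the coefficient $n-3$: rewriting \eqref{divfnablaR} as $f\Delta R=2nf|\mathring{Ric}|^2+(n-3)\langle\nabla R,\nabla f\rangle$ and setting $n=3$ kills the mixed term \emph{before} integrating, yielding the pointwise identity $\Delta R=6|\mathring{Ric}|^2$ on $int(M)$. Integrating $\Delta R$ (not $\mathrm{div}(f\nabla R)$) then produces the boundary flux $\int_{\partial M}\langle\nabla R,\nu\rangle\,dS$, which is compared to the hypothesis via $\nu=-\nabla f/|\nabla f|$ together with the fact that $|\nabla f|$ is constant on $\partial M$. You never invoke this constancy, and it is essential: without it one cannot pass from $\int_{\partial M}\langle\nabla R,\nu\rangle\,dS$ to a fixed multiple of $\int_{\partial M}\langle\nabla R,\nabla f\rangle\,dS$.
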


\section{Preliminaries}
\label{Preliminaries}

In this section we need recall some special tensors which will be important for understanding the desired results. Consider a Riemannian manifold $(M^n,g)$ of dimension $n\geq 3.$ We start with Weyl tensor, given by the decomposition formula
\begin{equation}\label{weyl}
W_{ijkl}=R_{ijkl}-\frac{1}{n-2}(Ric\varowedge g)_{ijkl}+\frac{R}{2(n-1)(n-2)}(g\varowedge g)_{ijkl},
\end{equation}
where $R_{ijkl}$ stands for the Riemann curvature tensor. The symbol $\varowedge$, in the above expression, denotes the Kulkarni-Nomizu product,  which is defined for any two symmetric $(0,2)$-tensor $S$ and $T$ as follows
$$(S\varowedge T)_{ijkl}=S_{ik}T_{jl}+S_{jl}T_{ik}-S_{il}T_{jk}-S_{jk}T_{il}.$$

In the sequel, we have the Cotton tensor $C$ which is given by
\begin{equation}\label{cotton}
\displaystyle{C_{ijk}=\nabla_{i}R_{jk}-\nabla_{j}R_{ik}-\frac{1}{2(n-1)}\big(\nabla_{i}Rg_{jk}-\nabla_{j}R g_{ik}).}
\end{equation}
It is easy to check that $C_{ijk}$ is skew-symmetric in the first two indices and trace-free in any two indices. These two tensors described above are related as follows
\begin{equation}\label{cottonwyel}
\displaystyle{C_{ijk}=-\frac{(n-2)}{(n-3)}\nabla_{l}W_{ijkl},}
\end{equation}provided $n\ge 4.$

Moreover, it is important to remember that, for any Riemannian manifold $M^{n},$  we have that
\begin{equation}
\nabla_{i}\nabla_{j}R_{kl}-\nabla_{j}\nabla_{i}R_{kl}=R_{ijkm}R_{ml}+R_{ijlm}R_{km}.
\end{equation}
For more details see \cite{Chow}. Such commutation formula for the first covariant derivative of the Ricci curvature can be used to obtain an interesting formula for the Laplacian of the norm of the Ricci tensor over an arbitrary manifold. In fact, with a standard computation, we may deduce
\begin{eqnarray}\label{Lric}
\Delta|Ric|^{2}&=&2|\nabla Ric|^{2}-|C_{ijk}|^{2}+2\nabla_{i}(C_{ijk}R_{jk})+2(R_{ij}R_{ik}R_{jk}-R_{ik}R_{jl}R_{ijkl})\nonumber\\
&&-\frac{n}{2(n-1)}|\nabla R|^{2}+\frac{1}{n-1}div[(n-2)Ric(\nabla R)+R\nabla R].
\end{eqnarray}
We refer the reader to \cite[Lemma 2.1]{bbb}, for its proof. Furthermore, it is possible to relate the tensor curvature with a smooth function $f\in C^{\infty}(M)$ using the Ricci identity,
\begin{eqnarray}\label{Ricciid}
\nabla_i\nabla_j\nabla_kf-\nabla_j\nabla_i\nabla_kf=R_{ijkl}\nabla_{l}f.
\end{eqnarray}

Now, we note that the fundamental equation of an Einstein-type manifold can be rewritten in the tensorial language as follows
\begin{equation}\label{eq:tensorial}
fR_{ij}=\nabla_{i}\nabla_{j}f+hg_{ij}.
\end{equation}
Tracing (\ref{eq:tensorial}), we have
\begin{equation}\label{eqtrace}
fR=\Delta f+nh.
\end{equation}
Using \eqref{eq:tensorial}, we infer that
$$\nabla_ifR_{jk}+f\nabla_iR_{jk}=\nabla_i\nabla_j\nabla_kf+\nabla_ihg_{jk}.$$
From \eqref{Ricciid}, we obtain
$$\nabla_ifR_{jk}+f\nabla_iR_{jk}=\nabla_j\nabla_i\nabla_kf+R_{ijkl}\nabla_{l}f+\nabla_ihg_{jk}.$$
We can conclude from the contracted second Bianchi identity $\frac{1}{2}\nabla_iR=g^{jk}\nabla_jR_{ki}$ and contracting on $i$ and $k$, we get
\begin{eqnarray*}
\frac{1}{2}f\nabla_jR= \nabla_j\Delta f+ \nabla_jh. 
\end{eqnarray*}
From \eqref{eqtrace}, we conclude that
\begin{eqnarray}\label{gradh}
(n-1)\nabla_jh=R\nabla_jf+\frac{1}{2}f\nabla_jR.
\end{eqnarray}
This implies that
\begin{eqnarray}\label{hessh}(n-1)\nabla_i\nabla_jh=\nabla_iR\nabla_jf+R\nabla_i\nabla_jf+\frac{1}{2}\nabla_if\nabla_jR+\frac{1}{2}f\nabla_i\nabla_jR.
\end{eqnarray}
Therefore, the trace of \eqref{hessh} on $i$ and $j$ yields
\begin{eqnarray}\label{laplah}
(n-1)\Delta h =\frac{3}{2}\langle \nabla R,\nabla f\rangle +R\Delta f+\frac{1}{2}f\Delta R.
\end{eqnarray}

Furthermore, by using (\ref{eqtrace}) it is not difficult to check that
\begin{equation}\label{IdRicHess} 
f\mathring{Ric}=\mathring{Hess f},
\end{equation}
where $\mathring{T}$ stands for the traceless of tensor $T.$

Under this notation we get the following formula for an Einstein-type manifold
\begin{equation}\label{auxC}
fC_{ijk}=R_{ijkl}\nabla_{l}f+\frac{R}{n-1}(\nabla_{i}fg_{jk}-\nabla_{j}fg_{ik})-(\nabla_{i}fR_{jk}-\nabla_{j}fR_{ik}).
\end{equation} 
Its proof can be found in \cite[Lemma 1]{L21}.

\section{A Bochner formula for Einstein-type manifolds}
In this section we shall deduce a couple of divergence formulas,  which allows us to obtain the classification of the Einstein-type manifolds under Ricci parallel condition. Such formulas were inspired by recent works considering the well-known structures, namely,  CPE metrics, Miao-Tam critical metrics and positive static triples, for more details, see for instance \cite{BaltCPE} and \cite{balt18}. 

\begin{lemma}\label{auxint1}
Let $(M^{n},g,f,h)$ be an Einstein-type manifold. Then we have
\begin{eqnarray*}
{\rm div} X&=&\nabla_{i}(fC_{ijk}R_{jk})+\left(\frac{Rf}{n-1}+\Delta f\right)|\mathring{Ric}|^{2}+\frac{1}{n-1}Ric(\nabla R,\nabla f)\\
&&+\frac{n-4}{4n(n-1)}\langle\nabla R^{2},\nabla f \rangle+\langle \nabla|\mathring{Ric}|^{2},\nabla f\rangle,
\end{eqnarray*}
where $X_{i}=R_{ik}R_{kj}\nabla_{j}f+R_{ijkl}\nabla_{l}fR_{jk}.$
\end{lemma}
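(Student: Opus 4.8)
The plan is to reduce everything to the auxiliary identity \eqref{auxC}. Contracting \eqref{auxC} with $R_{jk}$ and using $g^{jk}R_{jk}=R$ and $g^{ik}R_{jk}=R_{ij}$, one obtains
\[
fC_{ijk}R_{jk}=R_{ijkl}\nabla_{l}f\,R_{jk}+\frac{R}{n-1}\big(R\nabla_{i}f-R_{ij}\nabla_{j}f\big)-|Ric|^{2}\nabla_{i}f+R_{ik}R_{kj}\nabla_{j}f .
\]
The first and last terms on the right are exactly $X_{i}$, so this rearranges to
\[
X_{i}=fC_{ijk}R_{jk}-\frac{R^{2}}{n-1}\nabla_{i}f+\frac{R}{n-1}R_{ij}\nabla_{j}f+|Ric|^{2}\nabla_{i}f .
\]
Hence $\mathrm{div}\,X$ equals $\nabla_{i}(fC_{ijk}R_{jk})$ plus the divergences of the three explicit vector fields above, each of which can be expanded directly, with no need for a curvature commutation formula or for the raw contracted Bianchi identity for the Riemann tensor.

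First I would apply the product rule to the three remaining terms. The terms $\nabla_{i}(|Ric|^{2}\nabla_{i}f)$ and $\nabla_{i}\big(\tfrac{R^{2}}{n-1}\nabla_{i}f\big)$ simply produce $\langle\nabla|Ric|^{2},\nabla f\rangle+|Ric|^{2}\Delta f$ and $\tfrac{1}{n-1}\langle\nabla R^{2},\nabla f\rangle+\tfrac{R^{2}}{n-1}\Delta f$. For $\nabla_{i}\big(\tfrac{R}{n-1}R_{ij}\nabla_{j}f\big)$ I would use the contracted second Bianchi identity $\nabla_{i}R_{ij}=\tfrac12\nabla_{j}R$ together with the fundamental equation in the form $\nabla_{i}\nabla_{j}f=fR_{ij}-hg_{ij}$ (equation \eqref{eq:tensorial}), getting
\[
\nabla_{i}\Big(\tfrac{R}{n-1}R_{ij}\nabla_{j}f\Big)=\frac{1}{n-1}\Big[Ric(\nabla R,\nabla f)+\tfrac{R}{2}\langle\nabla R,\nabla f\rangle+fR|Ric|^{2}-hR^{2}\Big].
\]

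Next I would eliminate $h$ using the traced fundamental equation \eqref{eqtrace}, i.e. $h=\tfrac1n(fR-\Delta f)$, so that $\tfrac{hR^{2}}{n-1}=\tfrac{fR^{3}}{n(n-1)}-\tfrac{R^{2}\Delta f}{n(n-1)}$. Collecting all contributions and using $\langle\nabla R^{2},\nabla f\rangle=2R\langle\nabla R,\nabla f\rangle$ to merge the scalar-curvature gradient terms, the $\Delta f$-terms combine into $\Delta f\big(|Ric|^{2}-\tfrac{R^{2}}{n}\big)$, the $fR$-terms into $\tfrac{fR}{n-1}\big(|Ric|^{2}-\tfrac{R^{2}}{n}\big)$, and the coefficient of $\langle\nabla R^{2},\nabla f\rangle$ collapses to $-\tfrac{3}{4(n-1)}$. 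Finally, rewriting $|Ric|^{2}-\tfrac{R^{2}}{n}=|\mathring{Ric}|^{2}$ and $\langle\nabla|Ric|^{2},\nabla f\rangle=\langle\nabla|\mathring{Ric}|^{2},\nabla f\rangle+\tfrac1n\langle\nabla R^{2},\nabla f\rangle$, the coefficient of $\langle\nabla R^{2},\nabla f\rangle$ becomes $-\tfrac{3}{4(n-1)}+\tfrac1n=\tfrac{n-4}{4n(n-1)}$, which is precisely the stated identity.

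The argument is entirely algebraic once the reduction through \eqref{auxC} is set up. The step most prone to error — and the one I would be most careful about — is the elimination of $h$ via \eqref{eqtrace} together with the regrouping of the several $\langle\nabla R,\nabla f\rangle$-type contributions into the single coefficient $\tfrac{n-4}{4n(n-1)}$; everything else is routine bookkeeping with the trace identities $g^{jk}R_{jk}=R$ and $R_{ij}g_{ij}=R$.
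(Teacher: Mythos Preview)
Your proof is correct and follows essentially the same route as the paper's own argument: both use \eqref{auxC} to rewrite $X_i$ as $fC_{ijk}R_{jk}-\tfrac{R^{2}}{n-1}\nabla_{i}f+\tfrac{R}{n-1}R_{ij}\nabla_{j}f+|Ric|^{2}\nabla_{i}f$, then take the divergence, substitute the fundamental equation \eqref{eq:tensorial} and its trace \eqref{eqtrace}, and regroup using $|Ric|^{2}-\tfrac{R^{2}}{n}=|\mathring{Ric}|^{2}$. The only cosmetic difference is that the paper postpones the substitution $R_{ij}\nabla_{i}\nabla_{j}f=f|Ric|^{2}-hR$ to the penultimate line and then kills the leftover $\tfrac{R^{2}}{n(n-1)}(-\Delta f-nh+Rf)$ via \eqref{eqtrace}, whereas you eliminate $h$ directly; the computations are otherwise identical.
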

\begin{proof}
We start substituting Eq. (\ref{auxC}) into ${\rm div} X$, i.e., 
\begin{eqnarray*}
{\rm div}X&=&\nabla_{i}\Big[R_{ik}R_{kj}\nabla_{j}f+fC_{ijk}R_{jk}-\frac{R}{n-1}(\nabla_{i}fg_{jk}-\nabla_{j}fg_{ik})R_{jk}\\
&&+(\nabla_{i}fR_{jk}-\nabla_{j}fR_{ik})R_{jk} \Big]\\
&=&\nabla_{i}\Big[fC_{ijk}R_{jk}-\frac{R^{2}}{n-1}\nabla_{i}f+\frac{R}{n-1}R_{ij}\nabla_{j}f+\nabla_{i}f|Ric|^{2}\Big]\\
&=&\nabla_{i}(fC_{ijk}R_{jk})-\frac{3}{4(n-1)}\langle\nabla R^{2},\nabla f\rangle+\left(|Ric|^{2}-\frac{R^{2}}{n-1}\right)\Delta f\\
&&+\frac{1}{n-1}Ric(\nabla R,\nabla f)+\frac{R}{n-1}R_{ij}\nabla_{i}\nabla_{j}f+\langle\nabla f,\nabla|Ric|^{2}\rangle,
\end{eqnarray*}
which can be rewritten using (\ref{eq:tensorial}), in the following way
\begin{eqnarray*}
{\rm div}X&=&\nabla_{i}(fC_{ijk}R_{jk})+\left(\frac{Rf}{n-1}+\Delta f\right)|\mathring{Ric}|^{2}+\frac{1}{n-1}Ric(\nabla R,\nabla f)\\
&&+\frac{R^{2}}{n(n-1)}(-\Delta f-nh+Rf) +\frac{n-4}{4n(n-1)}\langle \nabla R^{2},\nabla f\rangle+\langle \nabla f,\nabla|\mathring{Ric}|^{2}\rangle.
\end{eqnarray*}
To conclude, just apply (\ref{eqtrace}).
\end{proof}

Now, we will provide another formula for ${\rm div} X.$

\begin{lemma}\label{auxint2}
Let $(M^{n},g,f,h)$ be an Einstein-type manifold. Then we have
\begin{eqnarray*}
{\rm div} X&=&f(R_{ij}R_{ik}R_{jk}+R_{ijkl}R_{il}R_{jk})+\frac{n}{2(n-1)}Ric(\nabla R,\nabla f)+\frac{f}{2}|C_{ijk}|^{2}\\
&&+C_{ijk}R_{ik}\nabla_{j}f-\frac{1}{2n(n-1)}\langle\nabla R^{2},\nabla f\rangle+\frac{1}{2}\langle\nabla f,\nabla|\mathring{Ric}|^{2}\rangle,
\end{eqnarray*}
where $X_{i}=R_{ik}R_{kj}\nabla_{j}f+R_{ijkl}\nabla_{l}fR_{jk}.$
\end{lemma}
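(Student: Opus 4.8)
The plan is to compute $\operatorname{div} X$ a second time, now expanding the term $R_{ijkl}\nabla_l f\, R_{jk}$ via the formula \eqref{auxC} rather than isolating the curvature cube, and to feed the resulting divergence through the Bochner-type Lichnerowicz identity \eqref{Lric} for $\Delta|Ric|^2$. Concretely, I would start from $X_i = R_{ik}R_{kj}\nabla_j f + R_{ijkl}\nabla_l f\, R_{jk}$ and rewrite the second summand using \eqref{auxC}: since $fC_{ijk} = R_{ijkl}\nabla_l f + \frac{R}{n-1}(\nabla_i f g_{jk} - \nabla_j f g_{ik}) - (\nabla_i f R_{jk} - \nabla_j f R_{ik})$, contracting against $R_{jk}$ gives $R_{ijkl}\nabla_l f\, R_{jk} = fC_{ijk}R_{jk} - \frac{R}{n-1}(R\nabla_i f - R_{ij}\nabla_j f) + (|Ric|^2\nabla_i f - R_{ij}R_{ik}\nabla_k f)$. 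This is the same identity already used in Lemma~\ref{auxint1}, so the point of departure must be different: here I would instead keep $R_{ijkl}R_{jk}\nabla_l f$ and attack $\operatorname{div}$ directly, using that $\nabla_i(R_{ijkl}R_{jk}) $ can be handled by the contracted second Bianchi identity $\nabla_i R_{ijkl} = \nabla_k R_{jl} - \nabla_l R_{jk}$ and by the definition of the Cotton tensor \eqref{cotton}.

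The key steps, in order, are: (i) expand $\operatorname{div} X = \nabla_i(R_{ik}R_{kj}\nabla_j f) + \nabla_i(R_{ijkl}R_{jk}\nabla_l f)$ by the product rule, producing terms $\nabla_i(R_{ik}R_{kj})\nabla_j f$, $R_{ik}R_{kj}\nabla_i\nabla_j f$, and similarly for the Riemann piece; (ii) replace every occurrence of $\nabla_i\nabla_j f$ by $fR_{ij} - hg_{ij}$ using \eqref{eq:tensorial}, which is what brings the cubic curvature terms $f(R_{ij}R_{ik}R_{jk} + R_{ijkl}R_{il}R_{jk})$ and the ``$h$'' contractions (the latter vanishing by tracelessness of the Cotton/Weyl-type combinations or combining into the $|\mathring{Ric}|^2$ term); (iii) convert the first-derivative-of-Ricci terms $\nabla_i R_{jk}$, $\nabla_i R_{il}$ into Cotton tensor plus scalar-curvature gradient terms via \eqref{cotton} and the contracted Bianchi identity, generating the $C_{ijk}R_{ik}\nabla_j f$ term, the $\frac{f}{2}|C_{ijk}|^2$ term (after re-contracting $C_{ijk}$ against its definition — this is where a $fC_{ijk}\nabla_i R_{jk}$-type term is symmetrized), and the $Ric(\nabla R,\nabla f)$ and $\langle\nabla R^2,\nabla f\rangle$ terms; (iv) identify $\langle\nabla f, \nabla|Ric|^2\rangle$ and split off its trace part to get $\langle\nabla f,\nabla|\mathring{Ric}|^2\rangle$ plus a multiple of $\langle\nabla R^2,\nabla f\rangle$, and collect all scalar-gradient terms to match the stated coefficient $-\frac{1}{2n(n-1)}$.

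The main obstacle I anticipate is bookkeeping the Cotton-tensor terms: the quadratic term $\frac{f}{2}|C_{ijk}|^2$ does not appear ``for free'' from a single contraction — it must emerge from recognizing that a term like $f\,\nabla_i R_{jk}\,(R_{ijkl}\nabla_l f / f)$-type expression, after using \eqref{auxC} to express $R_{ijkl}\nabla_l f$ and then re-inserting the Cotton definition, produces $\frac{1}{2}C_{ijk}(fC_{ijk})$ up to lower-order terms, exploiting the skew-symmetry of $C_{ijk}$ in $i,j$ to antisymmetrize $\nabla_i R_{jk}$. Tracking which terms cancel because of the symmetries of $C_{ijk}$ and $R_{ijkl}$ (skew in first two indices, trace-free, first Bianchi), and ensuring the $h$-dependent pieces genuinely drop out (they should, since $C_{ijk}$ and $R_{ijkl}$ contracted against $g_{jk}$ or $g_{il}$ vanish), is the delicate part; everything else is the standard Ricci-identity and second-Bianchi manipulation already illustrated in the proof of Lemma~\ref{auxint1}.
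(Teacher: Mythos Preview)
Your concrete steps (i)--(iv) are exactly the route the paper takes: expand $\operatorname{div}X$ by the product rule, replace $\nabla_i\nabla_j f$ via \eqref{eq:tensorial} to produce the cubic curvature terms, and then use \eqref{auxC} on the leftover term $R_{ijkl}\nabla_l f\,\nabla_i R_{jk}$ together with the skew-symmetry of $C_{ijk}$ to extract $\tfrac{f}{2}|C_{ijk}|^2$, finishing with \eqref{cotton} on $\nabla_i R_{jk}\,R_{ik}\nabla_j f$. The reference to \eqref{Lric} is a red herring (the paper uses it only for Lemma~\ref{LBochner}, not here), and you rightly abandon it.

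One correction to your bookkeeping: the $h$-terms do cancel, but \emph{not} because $g^{il}R_{ijkl}$ vanishes --- it doesn't, it equals $-R_{jk}$ in the paper's convention. Rather, $R_{ik}R_{kj}\nabla_i\nabla_j f$ contributes $-h|Ric|^2$ while $R_{ijkl}R_{jk}\nabla_i\nabla_l f$ contributes $-h\,g^{il}R_{ijkl}R_{jk}=+h|Ric|^2$, and these cancel; so the mechanism is a cancellation between the two summands of $X$, not a trace-vanishing of Riemann.
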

\begin{proof}
By direct computation using the fundamental equation of an Einstein-type manifold, we obtain
\begin{eqnarray}\label{auxdivX}
{\rm div}X&=&\nabla_{i}\nabla_{j}fR_{ik}R_{jk}+\frac{1}{2}Ric(\nabla R,\nabla f)+\nabla_{j}fR_{ik}\nabla_{i}R_{jk}\nonumber\\
&&+R_{ijkl}\nabla_{i}\nabla_{l}fR_{jk}+\nabla_{i}R_{ijkl}\nabla_{l}fR_{jk}+R_{ijkl}\nabla_{l}f\nabla_{i}R_{jk}\nonumber\\
&=&f(R_{ij}R_{ik}R_{kj}+R_{ijkl}R_{il}R_{jk})+\frac{1}{2}Ric(\nabla R,\nabla f)-\frac{1}{2}\langle\nabla f,\nabla|Ric|^{2}\rangle\nonumber\\
&&+2\nabla_{j}fR_{ik}\nabla_{i}R_{jk}+R_{ijkl}\nabla_{l}f\nabla_{i}R_{jk},
\end{eqnarray}
where in the last equality we have used the once contracted second Bianchi identity.

Now, using (\ref{cotton}) and  (\ref{auxC}) we can rewrite (\ref{auxdivX}) as 
\begin{eqnarray}\label{auxdivX1}
{\rm div}X&=&f(R_{ij}R_{ik}R_{kj}+R_{ijkl}R_{il}R_{jk})+\frac{1}{2}Ric(\nabla R,\nabla f)+\frac{f}{2}|C_{ijk}|^{2}\nonumber\\
&&-\frac{1}{4(n-1)}\langle \nabla f,\nabla R^{2}\rangle+\nabla_{i}R_{jk}R_{ik}\nabla_{j}f
\end{eqnarray}
Finally, it sufficient to substitute (\ref{cotton}) into (\ref{auxdivX1}) to get the requested result.
\end{proof}

\begin{lemma}\label{LBochner}
Let $(M^{n},g,f,h)$ be an Einstein-type manifold. Then we have:
\begin{eqnarray*}
{\rm div}(X)&=& -\left(\frac{Rf}{n-1}+\Delta f\right)|\mathring{Ric}|^{2}-\langle\nabla f,\nabla|\mathring{Ric}|^{2} \rangle+\frac{n-2}{n-1}\mathring{Ric}(\nabla R,\nabla f)\\
&&+f\left(|C_{ijk}|^{2}-|\nabla Ric|^{2}+\frac{n}{4(n-1)}|\nabla R|^{2}\right),
\end{eqnarray*}
where $X_{i}=-\frac{f}{2}\nabla_{i}|\mathring{Ric}|^{2}+2fC_{ijk}R_{jk}+\frac{n-2}{2(n-1)}fR_{ij}\nabla_{j}R-\frac{n-2}{4n(n-1)}f\nabla_{i}R^{2}$
\end{lemma}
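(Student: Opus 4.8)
The plan is to combine the two expressions for $\operatorname{div} X$ obtained in Lemmas~\ref{auxint1} and \ref{auxint2} — call them (A) and (B) — with the Bochner-type formula \eqref{Lric} for $\Delta|Ric|^2$, in order to eliminate the cubic curvature terms $R_{ij}R_{ik}R_{jk}$ and $R_{ijkl}R_{il}R_{jk}$ that appear in (B). Indeed, formula \eqref{Lric} reads
\begin{eqnarray*}
\Delta|Ric|^{2}&=&2|\nabla Ric|^{2}-|C_{ijk}|^{2}+2\nabla_{i}(C_{ijk}R_{jk})+2(R_{ij}R_{ik}R_{jk}-R_{ik}R_{jl}R_{ijkl})\\
&&-\frac{n}{2(n-1)}|\nabla R|^{2}+\frac{1}{n-1}\operatorname{div}[(n-2)Ric(\nabla R)+R\nabla R],
\end{eqnarray*}
so the combination $f(R_{ij}R_{ik}R_{jk}+R_{ijkl}R_{il}R_{jk})$ appearing in (B) can be traded, after multiplying \eqref{Lric} by $f/2$, for a term $\frac{f}{2}\Delta|Ric|^2$ plus $f$ times $|\nabla Ric|^2$, $|C_{ijk}|^2$, $|\nabla R|^2$, and a couple of divergence terms. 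The point is that the ``genuinely quartic'' curvature data disappear and everything is expressed through $|\mathring{Ric}|^2$, $|\nabla Ric|^2$, $|C_{ijk}|^2$, $|\nabla R|^2$ and first-order bilinear terms in $\nabla R$ and $\nabla f$.

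Next I would push the Laplacian term $\frac{f}{2}\Delta|Ric|^2$ inside a divergence: write $\frac{f}{2}\Delta|Ric|^2 = \frac12\operatorname{div}(f\nabla|Ric|^2) - \frac12\langle\nabla f,\nabla|Ric|^2\rangle$, and then convert $|Ric|^2$ to $|\mathring{Ric}|^2 + \frac{R^2}{n}$ throughout, using $\nabla|Ric|^2 = \nabla|\mathring{Ric}|^2 + \frac1n\nabla R^2$ and, where needed, \eqref{eqtrace} to handle $\Delta f$ and the scalar terms. Similarly the $\operatorname{div}[(n-2)Ric(\nabla R)+R\nabla R]$ piece from \eqref{Lric} gets absorbed into the vector field $X$ (this is exactly why the new $X$ in Lemma~\ref{LBochner} carries the extra summands $\frac{n-2}{2(n-1)}fR_{ij}\nabla_jR - \frac{n-2}{4n(n-1)}f\nabla_iR^2$, together with $2fC_{ijk}R_{jk}$ and $-\frac f2\nabla_i|\mathring{Ric}|^2$). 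One must be careful: these are $f$ times a divergence, not divergences outright, so each time one writes $f\operatorname{div}(Y) = \operatorname{div}(fY) - \langle\nabla f, Y\rangle$, producing correction terms $\langle\nabla f, Ric(\nabla R)\rangle = \mathring{Ric}(\nabla R,\nabla f) + \frac1n\langle\nabla R,\nabla f\rangle R$ and $\langle\nabla f,\nabla R^2\rangle$, which then have to be matched against the $\mathring{Ric}(\nabla R,\nabla f)$, $Ric(\nabla R,\nabla f)$ and $\langle\nabla R^2,\nabla f\rangle$ terms already present in (A) and (B). The mixed term $C_{ijk}R_{ik}\nabla_jf$ from (B) pairs with the $2\nabla_i(C_{ijk}R_{jk})$ from \eqref{Lric}: expanding $2f\nabla_i(C_{ijk}R_{jk}) = 2\operatorname{div}(fC_{ijk}R_{jk}) - 2C_{ijk}R_{jk}\nabla_if$, and since $C_{ijk}R_{ik}\nabla_jf = -C_{ijk}R_{jk}\nabla_if$ by the skew-symmetry $C_{ijk}=-C_{jik}$, these combine cleanly into the $X$-field and cancel out of the bulk.

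The main obstacle will be the bookkeeping of the first-order terms: after all substitutions one has a linear combination of $\mathring{Ric}(\nabla R,\nabla f)$, $Ric(\nabla R,\nabla f)$, $\langle\nabla R,\nabla f\rangle R$ (note $\langle\nabla R^2,\nabla f\rangle = 2R\langle\nabla R,\nabla f\rangle$) and $\langle\nabla f,\nabla|\mathring{Ric}|^2\rangle$ with rational coefficients in $n$, coming from three different sources — (A), (B), and the $f/2$ times \eqref{Lric} — and one needs all the scalar ($R$-only) first-order terms to collapse to zero and the $\mathring{Ric}(\nabla R,\nabla f)$ coefficient to land exactly on $\frac{n-2}{n-1}$. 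This is where using the relation $Ric(\nabla R,\nabla f) = \mathring{Ric}(\nabla R,\nabla f) + \frac1nR\langle\nabla R,\nabla f\rangle$ consistently, and invoking \eqref{eqtrace} to rewrite any leftover $R^2\Delta f$-type terms, is essential. Once the cancellations are verified, one reads off
$$
\operatorname{div}(X) = -\Big(\frac{Rf}{n-1}+\Delta f\Big)|\mathring{Ric}|^2 - \langle\nabla f,\nabla|\mathring{Ric}|^2\rangle + \frac{n-2}{n-1}\mathring{Ric}(\nabla R,\nabla f) + f\Big(|C_{ijk}|^2 - |\nabla Ric|^2 + \frac{n}{4(n-1)}|\nabla R|^2\Big),
$$
with $X$ as stated, which completes the proof. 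I would organize the write-up as: (i) record $\frac f2\cdot$\eqref{Lric}; (ii) substitute into (B) to kill the cubic terms; (iii) equate the result with (A) and solve, moving all admissible pieces into $\operatorname{div}(fY)$ terms and collecting the remainder.
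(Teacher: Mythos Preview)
Your proposal is correct and follows essentially the same route as the paper: the paper first equates Lemmas~\ref{auxint1} and \ref{auxint2} to isolate the cubic curvature term $f(R_{ij}R_{ik}R_{jk}+R_{ijkl}R_{il}R_{jk})$ (equation (a1)), then rewrites $f$ times \eqref{Lric} as a second expression for the same cubic term (equation (a2)), and compares; your plan of substituting $\tfrac{f}{2}\cdot$\eqref{Lric} into (B) and then equating with (A) is algebraically the same manipulation in a different order. The bookkeeping details you flag (the $f\operatorname{div}(Y)=\operatorname{div}(fY)-\langle\nabla f,Y\rangle$ corrections, the conversion $Ric=\mathring{Ric}+\tfrac{R}{n}g$, and the skew-symmetry $C_{ijk}R_{ik}\nabla_jf=-C_{ijk}R_{jk}\nabla_if$) are exactly the ones needed.
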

\begin{proof}
First, comparing Lemma~\ref{auxint1} and Lemma~\ref{auxint2} we may deduce 
\begin{eqnarray}\label{a1}
&&f(R_{ij}R_{ik}R_{jk}+R_{ijkl}R_{il}R_{jk})\nonumber\\
&&=\nabla_{i}(fC_{ijk}R_{jk})-C_{ijk}R_{ik}\nabla_{j}f+\left(\frac{Rf}{n-1}+\Delta f\right)|\mathring{Ric}|^{2}+\frac{1}{2}\langle\nabla f, \nabla |\mathring{Ric}|^{2}\rangle\nonumber\\
&&-\frac{n-2}{2(n-1)}Ric(\nabla R,\nabla f)+\frac{n-2}{4n(n-1)}\langle\nabla R^{2},\nabla f\rangle-\frac{f}{2}|C_{ijk}|^{2}.
\end{eqnarray}
On the other hand, with a straightforward computation, it is immediate to check that Eq. (\ref{Lric}) can be rewrite as 
\begin{eqnarray}\label{a2}
&&f(R_{ij}R_{ik}R_{jk}+R_{ijkl}R_{il}R_{jk})\nonumber\\
&&=\nabla_{i}\left(\frac{f}{2}\nabla_{i}|\mathring{Ric}|^{2}-fC_{ijk}R_{jk}+\frac{n-2}{2(n-1)}f\left(\frac{1}{2n}\nabla R^{2}-R_{ij}\nabla_{j}R\right)\right)\nonumber\\
&&-\frac{1}{2}\langle\nabla f,\nabla|\mathring{Ric}|^{2}\rangle+C_{ijk}R_{jk}\nabla_{i}f-f|\nabla Ric|^{2}+\frac{f}{2}|C_{ijk}|^{2}\nonumber\\
&&+\frac{n-2}{2(n-1)}\left(Ric(\nabla f,\nabla R)-\frac{1}{2n}\langle \nabla f,\nabla R^{2}\rangle\right)+\frac{n}{4(n-1)}f|\nabla R|^{2}.
\end{eqnarray}
To finalize, just compare the expressions (\ref{a1}) and (\ref{a2}).
\end{proof}

\section{Proof of Theorem~\ref{THM1Ricciparallel}}

\subsection{Proof of Theorem~\ref{THM1Ricciparallel}}
To begin with, we can use (\ref{eq:tensorial}) and (\ref{eqtrace}) to deduce
\begin{eqnarray}\label{vstatic}
-\Delta f g+Hessf-fRic&=&(-\Delta f-h)g\nonumber\\
&=&((n-1)h-Rf)g\nonumber\\
&=&\lambda g,
\end{eqnarray}
where $\lambda$ was defined in (\ref{lambda}).

Now, since we are assuming the parallel Ricci curvature condition, implies that $M^{n}$ has constant scalar curvature and the function $\lambda$ is constant, as consequence, identity (\ref{vstatic}) say for us that $(M^{n},g,f)$ is a $V$-static metric.

Therefore, if $\lambda>0,$ up to normalization, we have the Miao-Tam critial metrics introduced by Miao and Tam in \cite{MT09,MT11}, and the classification for Ricci parallel case was obtained by Baltazar and Ribeiro Jr. in \cite{br17}. Namely, we are in position to use Corollary 1 in \cite{br17} to conclude that $(M^{n},g)$ is isometric to a geodesic ball in $\mathbb{R}^{n}$, $\mathbb{S}^{n}$, or $\mathbb{H}^{n}.$ Otherwise, if $\lambda=0$ we have the well-known static spaces and the classification follows directly of Theorem~\ref{KL} announced in introduction. Finally, if $\lambda<0,$ with a straightforward computation, we have that  
$$-\left(\frac{Rf}{n-1}+\Delta f\right)=\frac{n}{n-1}\lambda,$$
and taking into account Lemma~\ref{LBochner} and our assumption that $(M^{n},g)$ has Ricci parallel curvature, we immediately have
$$\frac{n}{n-1}\lambda|\mathring{Ric}|^{2}=0$$
and this forces $(M^{n},g)$ to be Einstein. Now, it is important to note that the V-static equation satisfies  
\begin{eqnarray*}
Hessf&=&fRic+\Delta g+\lambda g\\
&=&fRic+\left(-\frac{Rf}{n-1}-\frac{n}{n-1}\lambda\right) g+\lambda g\\
&=&fRic-\frac{R}{n-1}fg-\frac{\lambda}{n-1}g,
\end{eqnarray*}
which is Eq. (\ref{FGeq}) for 
$$\frac{\mu\alpha}{\beta^{2}}=-1,\;\;\;\;\frac{\mu\Lambda}{\beta}=\frac{-R}{n-1}\;\;\;\;\text{and}\;\;\;\;\gamma=\frac{-\lambda}{n-1}.$$
Since we already know that $(M^n,g)$ is Einstein and we are assuming connect boundary, it suffices to apply Theorem 1 in \cite{GF22} to conclude that $M^{n}$ is isometric to a geodesic ball in $\mathbb{S}^{n}.$

\section{Proof of Theorem~\ref{Thm2}}

Firstly, motivated by the results in \cite{L15} we will find the condition for $(M^n, g,f,h)$ an Einstein-type manifold to be an Einstein manifold. For do this, we use  \eqref{IdRicHess} and \eqref{eqtrace} to obtain
$$f\left(R_{ij}-\frac{R}{n}g_{ij}\right)=\nabla_i\nabla_jf+\left(-\frac{fR}{n}+h\right)g_{ij}.$$

Thus, it follows that
\begin{eqnarray*}
f\mathring{R_{ij}}\nabla_{j}f&=&\nabla_i\nabla_jf\nabla_{j}f+\left(-\frac{fR}{n}+h\right)\nabla_if\\
&=&\frac{1}{2}\nabla_i|\nabla f|^2+\nabla_i(hf)-f\nabla_ih-\frac{fR}{n}\nabla_if
\end{eqnarray*}
Now, from \eqref{gradh}, we conclude that
\begin{eqnarray}\label{eqconst}
f\mathring{R_{ij}}\nabla_{j}f&=&\nabla_i\left(\frac{1}{2}|\nabla f|^2+hf+c_nRf^2\right)+\frac{f^2}{2n}\nabla_iR,
\end{eqnarray}
where $c_n=(1-2n)/(2n(n-1)).$
It is easy to see that if $(M^n,g)$ is an Einstein manifold, then 
$$\frac{1}{2}|\nabla f|^2+hf+c_nRf^2=\Lambda$$ is constant. Now, we study the function $\Lambda$ to obtain some classification for Einstein-type manifolds. 

\begin{lemma}\label{lemmain}
Let $(M^n, g,f,h)$ be an Einstein-type manifold. We consider the function $$\Lambda=\frac{1}{2}|\nabla f|^2+hf+c_nRf^2.$$ Then, 
$${\rm div}(f\nabla \Lambda)-2\langle \nabla \Lambda,\nabla f\rangle=f^3|\mathring{Ric}|^2+\frac{1}{2}f^2\langle \nabla R,\nabla f\rangle-\frac{1}{2n}{\rm div}(f^3\nabla R).$$
\end{lemma}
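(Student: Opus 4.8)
The plan is to start from the key identity \eqref{eqconst}, namely
$$f\mathring{R}_{ij}\nabla_j f=\nabla_i\Lambda+\frac{f^2}{2n}\nabla_i R,$$
and to turn it into the desired divergence identity by multiplying by $f$ and contracting appropriately. First I would rewrite \eqref{eqconst} as $f\nabla_i\Lambda=f^2\mathring{R}_{ij}\nabla_j f-\frac{f^3}{2n}\nabla_i R$, then take the divergence of both sides. On the left this gives ${\rm div}(f\nabla\Lambda)=f\Delta\Lambda+\langle\nabla f,\nabla\Lambda\rangle$, so I will need a second relation to isolate ${\rm div}(f\nabla\Lambda)-2\langle\nabla\Lambda,\nabla f\rangle$; equivalently, I would just compute $\nabla_i\big(f\cdot(f\nabla_i\Lambda)\big)={\rm div}(f^2\nabla\Lambda)$ or, more directly, contract the vector field $f^2\mathring{R}_{ij}\nabla_j f-\frac{f^3}{2n}\nabla_i R$ against $\nabla_i$ and match terms. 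The cleanest route: from \eqref{eqconst}, $f^2\mathring{R}_{ij}\nabla_jf=f\nabla_i\Lambda+\frac{f^3}{2n}\nabla_iR$, and I take ${\rm div}$ of the left-hand side $Y_i:=f^2\mathring{R}_{ij}\nabla_jf$.

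The main computation is therefore ${\rm div}(Y)$. Expanding,
$$\nabla_i\big(f^2\mathring{R}_{ij}\nabla_jf\big)=2f\langle\nabla f,\nabla\Lambda\rangle\cdot(\ldots)+f^2(\nabla_i\mathring{R}_{ij})\nabla_jf+f^2\mathring{R}_{ij}\nabla_i\nabla_jf+2f\,\mathring{R}_{ij}\nabla_if\nabla_jf.$$
Here I would use: (i) the contracted second Bianchi identity to write $\nabla_i\mathring{R}_{ij}=\nabla_iR_{ij}-\frac1n\nabla_jR=\frac12\nabla_jR-\frac1n\nabla_jR=\frac{n-2}{2n}\nabla_jR$; (ii) the traceless fundamental equation \eqref{IdRicHess}, $f\mathring{R}_{ij}=\mathring{Hess}\,f=\nabla_i\nabla_jf-\frac{\Delta f}{n}g_{ij}$, so that $f^2\mathring{R}_{ij}\nabla_i\nabla_jf=f\,\mathring{R}_{ij}\big(f\nabla_i\nabla_jf\big)=f^3|\mathring{Ric}|^2$ after noting $\mathring{R}_{ij}g_{ij}=0$ kills the trace term; (iii) again \eqref{IdRicHess} to convert $2f\mathring{R}_{ij}\nabla_if\nabla_jf$ into Hessian terms, which will recombine with the $\langle\nabla\Lambda,\nabla f\rangle$ piece coming from differentiating the $f^2$ factor. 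Collecting, the $f^3|\mathring{Ric}|^2$ term appears with the right coefficient, the Bianchi step produces $\frac{(n-2)}{2n}f^2\langle\nabla R,\nabla f\rangle$, and I expect the remaining lower-order pieces to assemble into $\frac12 f^2\langle\nabla R,\nabla f\rangle$ together with the boundary/divergence term, after using \eqref{eqconst} once more to re-express $2f\langle\nabla f,\nabla\Lambda\rangle$ back in terms of $\mathring{Ric}$ and $\nabla R$. Finally I would recognize $-\frac{1}{2n}{\rm div}(f^3\nabla R)$ on the right by writing $-\frac{1}{2n}{\rm div}(f^3\nabla R)=-\frac{1}{2n}f^3\Delta R-\frac{3}{2n}f^2\langle\nabla f,\nabla R\rangle$ and checking the $f^3\Delta R$ and $f^2\langle\nabla R,\nabla f\rangle$ coefficients balance.

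The step I expect to be the main obstacle is bookkeeping of the first-order terms in $\langle\nabla f,\nabla R\rangle$: there are several sources — differentiating the $f^2$ prefactor of $Y_i$ (which by \eqref{eqconst} itself produces a $\langle\nabla\Lambda,\nabla f\rangle$ that must be moved to the left side, explaining the $-2\langle\nabla\Lambda,\nabla f\rangle$), the Bianchi contribution $\frac{n-2}{2n}$, and the expansion of $-\frac{1}{2n}{\rm div}(f^3\nabla R)$ giving $-\frac{3}{2n}$; these must combine to exactly $+\frac12$. Getting the numerology $\frac{n-2}{2n}+(\text{prefactor term})+\frac{3}{2n}=\frac12$ to close requires care, but it is forced once \eqref{eqconst}, \eqref{IdRicHess} and the contracted Bianchi identity are applied consistently; no new geometric input is needed beyond identities already recorded in the Preliminaries.
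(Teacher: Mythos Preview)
Your approach is correct and in fact cleaner than the paper's. The paper computes $\Delta\Lambda$ directly: it differentiates $\Lambda$ twice, traces, invokes the classical Bochner formula for $\tfrac12\Delta|\nabla f|^2$, and then feeds in the auxiliary identities \eqref{gradh}, \eqref{hessh}, \eqref{laplah} for $\nabla h$, $Hess\,h$, $\Delta h$ before finally reorganizing via ${\rm div}(f^3\nabla R)=2f^2\langle\nabla f,\nabla R\rangle+f^2{\rm div}(f\nabla R)$. Your route bypasses all of this: multiplying \eqref{eqconst} by $f$ gives the vector identity $Y_i:=f^2\mathring{R}_{ij}\nabla_j f=f\nabla_i\Lambda+\frac{f^3}{2n}\nabla_i R$, and taking the divergence immediately packages the right-hand side as ${\rm div}(f\nabla\Lambda)+\frac{1}{2n}{\rm div}(f^3\nabla R)$; on the left, the three pieces of ${\rm div}\,Y$ are handled by \eqref{eqconst} again, contracted Bianchi, and \eqref{IdRicHess}, with no need for Bochner or for the $\Delta h$ formula. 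This is a genuine simplification.

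One small correction: your closing numerology is misstated. In the clean version there is no need to expand $-\frac{1}{2n}{\rm div}(f^3\nabla R)$ at all, so the ``$+\frac{3}{2n}$'' contribution does not enter. The actual check is just that the prefactor term (from $2f\,\mathring{R}_{ij}\nabla_if\nabla_jf$, rewritten via \eqref{eqconst}) contributes $\frac{1}{n}$ to the $f^2\langle\nabla R,\nabla f\rangle$ coefficient, and the Bianchi term contributes $\frac{n-2}{2n}$, giving $\frac{1}{n}+\frac{n-2}{2n}=\frac12$ as required; the $2\langle\nabla\Lambda,\nabla f\rangle$ from that same prefactor step is exactly what moves to the left-hand side.
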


\begin{proof}
We assume that $(M^n,g,f,h)$ is an Einstein-type and $\Lambda$ is defined above. Under these conditions, we infer 
\begin{eqnarray}\label{eq0}
\nabla_k\Lambda&=&\frac{1}{2}\nabla_k|\nabla f|^2+\nabla_k(hf)+c_n\nabla_k(Rf^2).
\end{eqnarray}
Taking the second derivative of \eqref{eq0} and using \eqref{gradh}, we have
\begin{eqnarray*}
\nabla_i\nabla_k\Lambda&=&\frac{1}{2}\nabla_i\nabla_k|\nabla f|^2+\frac{1}{n-1}\left(R\nabla_if\nabla_kf+\frac{1}{2}f\nabla_if\nabla_kR\right)+f\nabla_i\nabla_kh\\
&&+\frac{1}{n-1}\left(R\nabla_if\nabla_kf+\frac{1}{2}f\nabla_kf\nabla_iR\right)+h\nabla_i\nabla_kf\\
&&+c_n(2f\nabla_if\nabla_kR+f^2\nabla_i\nabla_kR+2R\nabla_if\nabla_kf+2f\nabla_iR\nabla_kf+2fR\nabla_i\nabla_kf).
\end{eqnarray*}
 Tracing on $i$ and $k$ we arrive at
\begin{eqnarray}\label{eq1}
\Delta \Lambda&=&\frac{1}{2}\Delta|\Delta f|^2+\frac{R}{n(n-1)}|\nabla f|^2+\frac{(-3n+2)}{n(n-1)}f\langle \nabla f,\nabla R\rangle+f\Delta h++c_nf^2\Delta R\nonumber \\
&&+(2c_nfR+h)\Delta f.
\end{eqnarray}
Now, we recall the classical Bochner formula
$$\frac{1}{2}\Delta{|\nabla f|^2}=|Hessf|^2+Ric(\nabla f, \nabla f)+\langle\nabla f,\nabla(\Delta f)\rangle.$$
Thus, substituting \eqref{eqtrace} and \eqref{gradh}, we obtain
\begin{eqnarray}\label{eq2}
\frac{1}{2}\Delta{|\nabla f|^2}&=&|Hessf|^2+Ric(\nabla f, \nabla f)-\frac{R}{n-1}|\nabla f|^2\nonumber\\
&&+\frac{n-2}{2(n-1)}f\langle\nabla R,\nabla f\rangle.
\end{eqnarray}
Proceeding, we combine \eqref{eq1} with \eqref{laplah} and \eqref{eq2} to get
\begin{eqnarray*}
\Delta \Lambda&=&|Hessf|^2+Ric(\nabla f, \nabla f)-\frac{R}{n}|\nabla f|^2+\frac{n-4}{2n}f\langle\nabla R,\nabla f\rangle\\
&&+\left(\frac{Rf}{n-1}+h+2c_nfR\right)\Delta f-\frac{1}{2n}f^2\Delta R.
\end{eqnarray*}
From \eqref{eqtrace}, \eqref{IdRicHess} and  \eqref{eqconst}, we infer
\begin{eqnarray*}
f\Delta \Lambda=f^3|\mathring{Ric}|^2+\langle\nabla \Lambda,\nabla f\rangle+\frac{(n-1)}{2n}f^{2}\langle \nabla f,\nabla R\rangle -\frac{f^{2}}{2n}{\rm div}(f\nabla R).
\end{eqnarray*}
Using that ${\rm div}(f^3\nabla R)=2f^2\langle \nabla f,\nabla R\rangle+f^2{\rm div}(f\nabla R)$, we finish the proof.
\end{proof}

An immediate consequence of Lemma \ref{lemmain} is the following

\begin{corollary}
Let $(M^n, g,f,h)$ be an Einstein-type manifold. If $\displaystyle\int_Mf^2\langle\nabla R,\nabla f\rangle dM_{g}\geq 0$ and the function $\Lambda$ is constant along of the flow of $\nabla f$. Then, $(M^n, g)$ is Einstein.
\end{corollary}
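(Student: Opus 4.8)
The plan is to rewrite the hypothesis in infinitesimal form and feed it into Lemma~\ref{lemmain}, then integrate over $M$ and exploit the two sign conditions. First I would observe that ``$\Lambda$ is constant along the flow of $\nabla f$'' means precisely that $\langle\nabla\Lambda,\nabla f\rangle=0$ on $M$, since along an integral curve $\gamma$ of $\nabla f$ one has $\frac{d}{dt}\Lambda(\gamma(t))=\langle\nabla\Lambda,\nabla f\rangle$. Substituting this into the identity of Lemma~\ref{lemmain} kills the term $-2\langle\nabla\Lambda,\nabla f\rangle$ and leaves
\[
{\rm div}(f\nabla\Lambda)=f^{3}|\mathring{Ric}|^{2}+\tfrac{1}{2}f^{2}\langle\nabla R,\nabla f\rangle-\tfrac{1}{2n}{\rm div}(f^{3}\nabla R).
\]

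Next I would integrate this equation over the compact manifold $M$ and apply the divergence theorem. Since $\partial M=f^{-1}(0)$, both $f$ and $f^{3}$ vanish on $\partial M$, so $\int_{M}{\rm div}(f\nabla\Lambda)\,dM_{g}=\int_{\partial M}f\langle\nabla\Lambda,\nu\rangle\,dS=0$ and, in the same way, $\int_{M}{\rm div}(f^{3}\nabla R)\,dM_{g}=\int_{\partial M}f^{3}\langle\nabla R,\nu\rangle\,dS=0$. Hence
\[
0=\int_{M}f^{3}|\mathring{Ric}|^{2}\,dM_{g}+\tfrac{1}{2}\int_{M}f^{2}\langle\nabla R,\nabla f\rangle\,dM_{g}.
\]

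Now I would invoke the sign conditions. By hypothesis $\int_{M}f^{2}\langle\nabla R,\nabla f\rangle\,dM_{g}\geq 0$, while $f\geq 0$ on $M$ makes the integrand $f^{3}|\mathring{Ric}|^{2}$ nonnegative, so $\int_{M}f^{3}|\mathring{Ric}|^{2}\,dM_{g}\geq 0$ as well. A sum of two nonnegative numbers that equals zero forces each of them to vanish, so in particular $\int_{M}f^{3}|\mathring{Ric}|^{2}\,dM_{g}=0$. Since $f>0$ on ${\rm int}(M)$, this yields $\mathring{Ric}\equiv 0$ on ${\rm int}(M)$, and then on all of $M$ by continuity; that is, $(M^{n},g)$ is Einstein.

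There is essentially no obstacle here once Lemma~\ref{lemmain} is available: the only points deserving a (trivial) remark are that the two boundary integrals indeed vanish, which is immediate from $f|_{\partial M}=0$, and the closing step passing from $\mathring{Ric}=0$ in the interior to $\mathring{Ric}=0$ on $M$ by continuity. (As a byproduct one also gets $\int_{M}f^{2}\langle\nabla R,\nabla f\rangle\,dM_{g}=0$, consistent with the fact that an Einstein Einstein-type manifold has constant scalar curvature.)
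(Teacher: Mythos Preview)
Your proof is correct and follows essentially the same approach as the paper: interpret the flow hypothesis as $\langle\nabla\Lambda,\nabla f\rangle=0$, integrate the identity of Lemma~\ref{lemmain} over $M$, use $f|_{\partial M}=0$ to kill the boundary terms, and combine the two nonnegativity conditions to force $\mathring{Ric}=0$. The paper's own proof is more terse but identical in substance.
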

\begin{proof}
In fact, since we are admitting that $\langle \nabla \Lambda, \nabla f\rangle=0$ and $\displaystyle\int_M f^2\langle\nabla R,\nabla f\rangle dM_{g}\geq 0,$ we integrating the expression in Lemma \ref{lemmain} and using that $f>0$ on int(M) and $f=0$ on $\partial M$, to conclude that $\mathring{Ric}=0,$ i.e. $(M^n, g)$ is Einstein.
\end{proof}
In particular, if we consider $f=1+F,$ where $F$ is the potential functional in the critical point equation we obtain the Corollary 1 in \cite{BF15}, because $R$ is constant.

\begin{corollary}
Let $(M^n, g,f,h)$ be an Einstein-type manifold. If $Ric(\nabla f)=\frac{R\nabla f}{n}$ and $\displaystyle\int_Mf^2\langle\nabla R,\nabla f\rangle dM_{g}\geq 0$. Then, $(M^n, g)$ is Einstein.
\end{corollary}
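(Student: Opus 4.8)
The plan is to observe that the hypothesis $Ric(\nabla f)=\frac{R}{n}\nabla f$ is precisely the pointwise condition $\mathring{R}_{ij}\nabla_jf=0$, and then to feed this into the two identities already at our disposal: equation \eqref{eqconst} and Lemma~\ref{lemmain}. Since the left-hand side of \eqref{eqconst} is $f\mathring{R}_{ij}\nabla_jf$, under our assumption it vanishes identically, which gives
$$\nabla_i\Lambda=-\frac{f^2}{2n}\nabla_iR \quad\text{on } M, \qquad\text{hence}\qquad \langle\nabla\Lambda,\nabla f\rangle=-\frac{f^2}{2n}\langle\nabla R,\nabla f\rangle.$$

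Next I would integrate the identity of Lemma~\ref{lemmain} over $M$. Because $M^n$ is compact and $f\equiv 0$ on $\partial M$, the divergence theorem kills both $\int_M{\rm div}(f\nabla\Lambda)\,dM_g$ and $\int_M{\rm div}(f^3\nabla R)\,dM_g$, leaving
$$-2\int_M\langle\nabla\Lambda,\nabla f\rangle\,dM_g=\int_Mf^3|\mathring{Ric}|^2\,dM_g+\frac12\int_Mf^2\langle\nabla R,\nabla f\rangle\,dM_g.$$
Substituting the expression for $\langle\nabla\Lambda,\nabla f\rangle$ found above and collecting the terms involving $\int_M f^2\langle\nabla R,\nabla f\rangle\,dM_g$ yields
$$\int_Mf^3|\mathring{Ric}|^2\,dM_g=\Big(\frac1n-\frac12\Big)\int_Mf^2\langle\nabla R,\nabla f\rangle\,dM_g=\frac{2-n}{2n}\int_Mf^2\langle\nabla R,\nabla f\rangle\,dM_g.$$

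Finally, since $n\geq 3$ the constant $\frac{2-n}{2n}$ is strictly negative, while by hypothesis $\int_Mf^2\langle\nabla R,\nabla f\rangle\,dM_g\geq 0$; thus the right-hand side is nonpositive. As the left-hand side is manifestly nonnegative, both sides vanish, so $f^3|\mathring{Ric}|^2\equiv 0$. Since $f>0$ on $int(M)$, this forces $\mathring{Ric}\equiv 0$ on $int(M)$, and by continuity on all of $M$; hence $(M^n,g)$ is Einstein.

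The computation is essentially routine once one notices that the curvature hypothesis collapses \eqref{eqconst} to a linear relation between $\nabla\Lambda$ and $\nabla R$; the only point requiring a little care is the vanishing of the boundary integrals in the integration by parts, which is guaranteed by $f|_{\partial M}=0$. So I do not anticipate a genuine obstacle here — the statement is a short corollary of Lemma~\ref{lemmain} together with \eqref{eqconst}.
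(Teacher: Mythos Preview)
Your argument is correct and is essentially the paper's own proof: both use \eqref{eqconst} to obtain $\langle\nabla\Lambda,\nabla f\rangle=-\frac{f^2}{2n}\langle\nabla R,\nabla f\rangle$, plug this into Lemma~\ref{lemmain}, integrate (boundary terms vanish since $f|_{\partial M}=0$), and conclude from the sign of $\frac{2-n}{2n}$. The only cosmetic difference is that the paper substitutes before integrating to first display the pointwise identity ${\rm div}(f\nabla\Lambda)=f^3|\mathring{Ric}|^2+\frac{n-2}{2n}f^2\langle\nabla R,\nabla f\rangle-\frac{1}{2n}{\rm div}(f^3\nabla R)$, whereas you integrate first and then substitute.
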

\begin{proof}
From \eqref{eqconst}, we obtain that $\langle \nabla \Lambda, \nabla f\rangle=-\frac{f^2}{2n}\langle \nabla R, \nabla f\rangle.$ Using Lemma \ref{lemmain}, we infer 
 $${\rm div}(f\nabla \Lambda)=f^3|\mathring{Ric}|^2+\frac{(n-2)}{2n}f^2\langle \nabla R,\nabla f\rangle-\frac{1}{2n}{\rm div}(f^3\nabla R).$$
Again, integrating it over $M$ and using that $f>0$ on int(M) and $f=0$ on $\partial M$ we finish the proof.
\end{proof}

\subsection{Proof of Theorem~\ref{Thm2}} Since $\Lambda$ is constant, from Lemma \ref{lemmain} and $f>0$ on $M$, we obtain 
\begin{equation}\label{divfnablaR}
{\rm div}(f\nabla R)=2nf|\mathring{Ric}|^2+(n-2)\langle \nabla R,\nabla f\rangle.
\end{equation}
Integrating it over $M,$ using that $\displaystyle\int_M\langle \nabla R, \nabla f \rangle dM_{g}\geq 0$ and $f=0$ on $\partial M,$ we conclude that $\mathring{Ric}=0,$ i.e., $(M^n,g)$ is Einstein. 

\subsection{Conclusion of the proof of Corollary~\ref{Co2}}
Before to proceed, notice that Eq. (\ref{divfnablaR}) can be rewritten as 
$$f\Delta R=2nf|\mathring{Ric}|^2+(n-3)\langle \nabla R,\nabla f\rangle.$$
Then, considering the three dimensional case, we immediately have
$$\Delta R=6|\mathring{Ric}|^2.$$
Whence, on integrating this last expression over $M^{3}$, we apply Stokes's formula to arrive at
$$\int_{\partial M}\langle\nabla R,\frac{\nabla f}{|\nabla f|}\rangle dS+6\int_{M}|\mathring{Ric}|^2dM_{g}=0,$$
where the normal vector on $\partial M$ is defined by $\nu=-\frac{\nabla f}{|\nabla f|}.$
Since $|\nabla f||_{\partial M}$ is constant and $\int_{\partial M}\langle\nabla R,\nabla f\rangle dS\geq0,$ we obtain an Einstein manifold. So, the proof is completed.

Moreover, we can deduce the following:

\begin{corollary}\label{ricautov}
Let $(M^{n},g,f,h)$  be an Einstein-type manifold with constant scalar curvature and suppose that $Ric(\nabla f)=\frac{R}{n}\nabla f.$ Then, $(M^n, g)$ is Einstein.
\end{corollary}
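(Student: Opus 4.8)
The plan is to reduce this statement to the machinery already assembled for Theorem~\ref{Thm2}. First I would rewrite the hypothesis $Ric(\nabla f)=\frac{R}{n}\nabla f$ in the equivalent form $\mathring{R}_{ij}\nabla_j f=0$, which is immediate since $\mathring{R}_{ij}\nabla_j f=R_{ij}\nabla_j f-\frac{R}{n}\nabla_i f$. Feeding this, together with the assumption that $R$ is constant (so $\nabla R\equiv 0$), into identity \eqref{eqconst}, the right-hand side collapses to $\nabla_i\Lambda$ while the left-hand side is zero; hence $\nabla\Lambda\equiv 0$, i.e.\ the function $\Lambda=\frac12|\nabla f|^2+hf+c_nRf^2$ introduced before Lemma~\ref{lemmain} is constant on $M$.

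With $\Lambda$ constant I would then invoke Lemma~\ref{lemmain}. Its left-hand side ${\rm div}(f\nabla\Lambda)-2\langle\nabla\Lambda,\nabla f\rangle$ vanishes identically, and because $R$ is constant both $\langle\nabla R,\nabla f\rangle$ and ${\rm div}(f^3\nabla R)$ vanish as well. What survives is the pointwise identity $0=f^3|\mathring{Ric}|^2$. Since $f>0$ on $int(M)$, this forces $\mathring{Ric}\equiv 0$ on the interior, and by continuity on all of $M$; that is, $(M^n,g)$ is Einstein. Equivalently, one can read the conclusion straight off \eqref{divfnablaR}: once $\Lambda$ is known to be constant, that identity reads ${\rm div}(f\nabla R)=2nf|\mathring{Ric}|^2+(n-2)\langle\nabla R,\nabla f\rangle$, and constancy of $R$ kills the left-hand side and the last term, again giving $f|\mathring{Ric}|^2=0$.

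There is essentially no hard step here — in particular, no integration or boundary analysis is needed, since the integral hypotheses appearing in Theorem~\ref{Thm2} and its earlier corollaries become automatic under constant scalar curvature. The only point deserving care is the bookkeeping that converts $Ric(\nabla f)=\frac{R}{n}\nabla f$ into $\mathring{Ric}(\nabla f)=0$ and the check that every $\nabla R$-term occurring in \eqref{eqconst} and in Lemma~\ref{lemmain} genuinely drops out, after which positivity of $f$ on $int(M)$ closes the argument.
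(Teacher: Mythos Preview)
Your proof is correct and follows essentially the same approach as the paper: use \eqref{eqconst} together with $\nabla R=0$ and $\mathring{Ric}(\nabla f)=0$ to conclude $\Lambda$ is constant, then deduce $f|\mathring{Ric}|^2=0$. The only cosmetic difference is that the paper packages the second step as an appeal to Theorem~\ref{Thm2} (whose integral hypothesis is trivially satisfied when $R$ is constant), whereas you unwind that step and invoke Lemma~\ref{lemmain} or \eqref{divfnablaR} directly to get the pointwise identity.
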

\begin{proof}In fact, since $R$ is constant, $Ric(\nabla f)=\frac{R}{n}\nabla f$ and $f>0,$ then by \eqref{eqconst}, we obtain that $\Lambda$ is constant. This result follows by Theorem \ref{Thm2}.
\end{proof}

Another consequence of Theorem \ref{Thm2} is the Theorem 1 proved by Leandro \cite{L15}, which give a necessary and sufficient condition for the Besse Conjecture or CPE equation to be true in terms of the norm of potential function $F$. More precisely, let $(M^n,g,f,h)$ be an Einstein-type manifold with $h=\frac{R}{n-1}f-\frac{R}{n(n-1)},$  $f=1+F$ and $R$ constant, where F is the potential function in the critical point equation, see \cite{L15}, in this case, we obtain

\begin{corollary} Let $(M^n, g, f,h)$ be an Einstein-type manifold with $R$ constant and $\partial M=\emptyset$. Then $(M^n,g)$ is Einstein if and only if
 $$\Lambda=\frac{1}{2}|\nabla f|^2+hf+c_nRf^2=\frac{1}{2}|\nabla F|^2+\frac{R}{2n(n-1)}F^2$$
 is constant.
\end{corollary}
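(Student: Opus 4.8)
The plan is to derive the identity by specializing Lemma~\ref{lemmain} and Eq.~(\ref{eqconst}) to the CPE normalization $f = 1+F$, $h = \frac{R}{n-1}f - \frac{R}{n(n-1)}$, with $R$ constant. Since $R$ is constant, $\nabla R = 0$ and the cubic term $c_n R f^2$ becomes a fixed multiple of $f^2$; combining it with $hf = \frac{R}{n-1}f^2 - \frac{R}{n(n-1)}f$ and the substitution $f = 1+F$ (so $|\nabla f|^2 = |\nabla F|^2$), a routine algebraic simplification should collapse $\Lambda$ to $\frac{1}{2}|\nabla F|^2 + \frac{R}{2n(n-1)}F^2$ plus a constant; since a constant shift of $\Lambda$ does not affect whether $\Lambda$ is constant, the two displayed expressions have the same constancy status. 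This handles the bookkeeping that makes the two formulations of $\Lambda$ interchangeable.

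For the forward direction, if $(M^n,g)$ is Einstein then $\mathring{Ric}=0$, and since $R$ is constant we read off directly from Eq.~(\ref{eqconst}) that $\nabla_i \Lambda = f^3|\mathring{Ric}|^2 \cdot(\text{stuff}) = 0$ — more precisely, with $\nabla R = 0$ the right-hand side of (\ref{eqconst}) is $f\mathring{R_{ij}}\nabla_j f = 0$ on one side and $\nabla_i \Lambda$ on the other, so $\nabla \Lambda = 0$ and $\Lambda$ is constant. Alternatively, this is already recorded just after Eq.~(\ref{eqconst}) in the text.

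For the converse, suppose $\Lambda$ is constant. With $\partial M = \emptyset$ there is no boundary, but the manifold need not be compact a priori — however, in the CPE setting one works with closed manifolds, so I would invoke compactness and integrate the identity from Lemma~\ref{lemmain}. Since $\Lambda$ is constant, $\langle \nabla\Lambda,\nabla f\rangle = 0$ and $\mathrm{div}(f\nabla\Lambda) = 0$; also $\nabla R = 0$ kills the $\langle\nabla R,\nabla f\rangle$ and $\mathrm{div}(f^3\nabla R)$ terms. Lemma~\ref{lemmain} then reduces to $0 = f^3|\mathring{Ric}|^2$, and since $f = 1+F$ where $F$ is the CPE potential (which satisfies $f > 0$ generically, or at least $f^3 \not\equiv 0$), integrating over the closed manifold forces $\mathring{Ric} \equiv 0$, i.e. $(M^n,g)$ is Einstein. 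One can equivalently use Eq.~(\ref{divfnablaR}), which with $\nabla R = 0$ gives $0 = \mathrm{div}(f\nabla R) = 2nf|\mathring{Ric}|^2$ directly.

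The main subtlety — and the only real obstacle — is making sure the positivity/nonvanishing of the weight $f^3$ is legitimate in the CPE normalization: in Definition~\ref{def1} one has $f>0$ on $int(M)$, but for CPE metrics $f = 1+F$ and $F$ is not constrained to keep $f$ positive everywhere, so I would either argue that the zero set of $f$ has measure zero (it is a level set of a nonconstant real-analytic-type function, using that solutions of the CPE equation have enough regularity) or, more cleanly, note that $f^3|\mathring{Ric}|^2$ being a divergence that integrates to zero together with $f^3|\mathring{Ric}|^2 \geq 0$ wherever $f \geq 0$ and $\leq 0$ wherever $f \leq 0$ still pins down $\mathring{Ric}=0$ after splitting $M$ into $\{f>0\}$ and $\{f<0\}$ and using that $f\nabla f$ vanishes on $\{f=0\}$. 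Everything else is the algebraic identification of the two $\Lambda$'s plus a direct application of Lemma~\ref{lemmain} and Theorem~\ref{Thm2}.
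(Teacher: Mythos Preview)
Your approach is essentially the paper's own: the forward direction is read off from Eq.~(\ref{eqconst}) (as the text notes just after that equation), and the converse is exactly Theorem~\ref{Thm2} specialized to $R$ constant (so the integral hypothesis is trivially $0\ge 0$), which in turn is the integration of Lemma~\ref{lemmain}. Your algebraic reduction of $\Lambda$ to $\tfrac12|\nabla F|^2+\tfrac{R}{2n(n-1)}F^2$ up to an additive constant is correct.

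The ``only real obstacle'' you flag is not one here: in the paper's framework (Definition~\ref{def1}) the hypothesis $\partial M=\emptyset$ means $f^{-1}(0)=\emptyset$ and $f>0$ on all of $M$, so the weight $f^3$ is strictly positive and no sign-splitting or measure-zero argument is needed.
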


\begin{acknowledgement}
The first author was partially supported by Brazilian National Council for Scientific and Technological Development (CNPq Grants 403349/2021-4 and 408834/2023-4) and FAPITEC/SE/Brazil.
The second author was partially supported by PPP/FAPEPI/MCT/CNPq, Brazil [Grant: 007/2018], and CNPq/Brazil [Grant: 422900/2021-4] and [Grant:302389/2022-9].
\end{acknowledgement}


\begin{thebibliography}{BB}

\bibitem{Amb} Ambrozio, L.: On static three-manifolds with positive scalar curvature. J. Diff. Geom. \textbf{107} (1) (2017), 1--45. 

\bibitem{AA22} Andrade, M. and de Melo, A.: Some characterizations of compact Einstein-type manifolds. Lett. Math. Phys. \textbf{114} (35), 1--18 (2024).

\bibitem{BaltCPE} Baltazar, H.: On critical point equation of compact manifolds with zero radial
Weyl curvature. Geom. Dedicata 202, 337--355 (2019)

\bibitem{BQ24} Baltazar, H. and Queiroz, C.: Critical metrics of the volume functional with pinched curvature. Illinois J. of Math. \textbf{67} (4) (2023), 705--713. 

\bibitem{bbb} Baltazar, H., Batista, R., Bezerra, K.: On the volume functional of compact manifolds with harmonic Weyl tensor. Math. Nachr. \textbf{296} (2023), 1366--1379. 

\bibitem{br17} Baltazar, H. and Ribeiro Jr., E.: Critical metrics of the volume functional on manifolds with boundary. Proc. of the Amer. Math. Soc. 145, 3513--3523 (2017).

\bibitem{balt18} Baltazar, H. and Ribeiro Jr., E. Remarks on critical metrics of the scalar curvature and volume functionals on compact manifolds with boundary. Pacific J. Math. 1,  29--45 (2018).

\bibitem{BF15} Benjamim, F.: Remarks on critical point metrics of the total scalar curvature functional. Archiv der Mathematik 104.5 (2015): 463-470.

\bibitem{CMMR} Catino, G., Mastrolia, P., Monticelli, M. and Rigoli, M.: On the geometry of gradient Einstein-type manifolds. Pacific J. Math. \textbf{286} (1) (2017), 39--67.

\bibitem{Chow} Chow, B. et al.: The Ricci flow: techniques and applications. Part I. Geometric aspects, Mathematical Surveys and Monographs, vol 135. American Mathematical Society, Providence, RI, 2007.

\bibitem{GF22} Freitas, A. and Gomes, J.N.: Compact gradient Einstein-type manifolds with boundary. Arxiv:2205.07827v1[math.DG].  

\bibitem{FS20} Freitas, A. and Santos, M.: Boundary topology and rigidity results for generalized $(\lambda,n+m)-$ Einstein manifolds. Annali di Matematica Pura ed Applicata. \textbf{53} (1961), 89--95.


\bibitem{HCY16} Hwang, J., Chang, J. and Yun, G.: Nonexistence  of multiple black holes in static space-times and weakly harmonic curvature. Gen. Rel. Grav. \textbf{49} (9) (2016), 1--16.  

\bibitem{HY} Hwang, S. and Yun, G.: Einstein-type manifolds with complete divergence of Weyl an Riemann tensor. Bull.Korean. Math. Soc. \textbf{59} 5 (2022), 1167--1176

\bibitem{HY22} Hwang, S. and Yun, G.: On the geometry of Einstein-type manifolds with some structural conditions. J. Math. Anal. Appl. \textbf{516} (2) (2022), 126527.

\bibitem{kobayashi} Kobayashi, O.: A differential equation arising from scalar curvature function. J. Math. Soc. Japan. \textbf{34} (1982), 665--675.

\bibitem{lafontaine} Lafontaine, J.: Sur la g\'eom\'etrie d'une g\'en\'eralisation de l'\'equation diff\'erentielle d'Obata.  J. Math. Pures Appliqu\'ees. \textbf{62} (1983), 63--72.

\bibitem{L15} Leandro, B.: A note on critical point metrics of the total scalar curvature functional. J. Math. Anal. Appl. \textbf{424} (2015) 1544--1548.

\bibitem{L21} Leandro, B.: Vanishing conditions on Weyl tensor for Einstein-type manifolds. Pacific J. Math. \textbf{314} (1) (2021), 99--113.


\bibitem{Shen} Shen, Y.: A note on Fischer-Marsden's conjecture. Proceedings AMS. \textbf{125} (3) (1997), 901--905.

\bibitem{MT09} Miao, P., Tam, L.-F.: On the volume functional of compact manifolds with boundary with constant scalar curvature. Calc. Var. PDE. 36, 141--171 (2009)

\bibitem{MT11} Miao, P. and Tam, L.-F.: Einstein and conformally at critical metrics of the volume functional. Trans. Amer. Math. Soc. 363, 2907--2937 (2011).


\end{thebibliography}
\end{document}